\newtheorem{theorem}{Theorem}[section]
\newtheorem{corollary}[theorem]{Corollary}
\newtheorem{lemma}[theorem]{Lemma}
\newtheorem{proposition}[theorem]{Proposition}
\newtheorem{definition}[theorem]{Definition}
\newtheorem{remark}[theorem]{Remark}
\numberwithin{equation}{section}
\begin{document}
\title{Mass-conserving solutions to coagulation-fragmentation equations with non-integrable fragment distribution function} 

\author{Philippe Lauren\c{c}ot}
\address{Institut de Math\'ematiques de Toulouse, UMR~5219, Universit\'e de Toulouse, CNRS \\ F--31062 Toulouse Cedex 9, France}
\email{laurenco@math.univ-toulouse.fr}

\keywords{coagulation - multiple fragmentation - non-integrable fragment distribution - conservation of matter}
\subjclass{45K05}

\date{\today}

\begin{abstract}
Existence of mass-conserving weak solutions to the coagulation-frag\-men\-ta\-tion equation is established when the fragmentation mechanism produces an infinite number of fragments after splitting. The coagulation kernel is assumed to increase at most linearly for large sizes and no assumption is made on the growth of the overall fragmentation rate for large sizes. However, they are both required to vanish for small sizes at a rate which is prescribed by the (non-integrable) singularity of the fragment distribution.
\end{abstract}

\maketitle

%
%
\pagestyle{myheadings}
\markboth{\sc{Ph. Lauren\c cot}}{\sc{C-F equations with non-integrable fragment distribution}}

\section{Introduction}\label{s1}

A mean-field description of the dynamics of a system of particles varying their sizes by pairwise coalescence and multiple fragmentation is provided by the coagulation-fragmentation equation
\begin{subequations}\label{a1}
\begin{equation}
\partial_t f(t,x) = \mathcal{C}f(t,x) + \mathcal{F}f(t,x)\ , \qquad (t,x)\in (0,\infty)^2\ , \label{a1a}
\end{equation}
where the coagulation and fragmentation reaction terms are given by
\begin{equation}
\mathcal{C}f(x) := \frac{1}{2} \int_0^x K(x-y,y) f(y) f(x-y)\ \mathrm{d}y - \int_0^\infty K(x,y) f(x) f(y)\ \mathrm{d}y \label{a1b}
\end{equation}
and
\begin{equation}
\mathcal{F}f(x) := - a(x) f(x) + \int_x^\infty a(y) b(x,y) f(y)\ \mathrm{d}y \label{a1c}
\end{equation}
for $x\in (0,\infty)$. 
\end{subequations}
We supplement \eqref{a1} with an initial condition 
\begin{equation}
f(0,x) = f^{in}(x)\ , \qquad x\in (0,\infty)\ . \label{a2}
\end{equation}
In  \eqref{a1}, $f=f(t,x)$ denotes the size distribution function of the particles of size $x\in (0,\infty)$ at time $t>0$ and $K$ is the coagulation kernel which describes how likely particles of respective sizes $x$ and $y$ merge. The first term in the coagulation term \eqref{a1b} accounts for the formation of particles of size $x$ resulting from the coalescence of two particles of respective sizes $y\in (0,x)$ and $x-y$, while the second term describes the disappearance of particles of size $x$ as they merge with other particles of arbitrary size. The first term in the fragmentation term \eqref{a1c} involves the overall fragmentation rate $a(x)$ and accounts for the loss of particles of size $x$ due to breakup while the second term describes the contribution to particles of size $x$ of the fragments resulting from the splitting of a particle of arbitrary size $y>x$, the distribution of fragments being given according to the daughter distribution function $b(x,y)$. Since no matter is lost during fragmentation events, $b$ is required to satisfy
\begin{equation}
\int_0^y x b(x,y)\ \mathrm{d}x = y\ , \qquad y>0\ . \label{a3}
\end{equation}
As the merging of two particles of respective sizes $x$ and $y$ results in a particle of size $x+y$, it is then expected that conservation of matter holds true during time evolution, that is,
\begin{equation}
\int_0^\infty x f(t,x)\ \mathrm{d}x = \varrho := \int_0^\infty x f^{in}(x) \mathrm{d}x\ , \qquad t\ge 0\ , \label{a4}
\end{equation}
provided $\varrho$ is finite. Since the pioneering works by Melzak \cite{Melz57b}, McLeod \cite{McLe64}, and Stewart \cite{Stew89}, the existence of weak solutions to \eqref{a1}-\eqref{a2} is investigated in several papers under various assumptions on the coagulation and fragmentation coefficients $K$, $a$, and $b$ and the initial condition $f^{in}$ \cite{BaLa11, BaLa12a, BLL13, DuSt96b, ELMP03, GKW11, GLW12, Laur00, LaMi02b}, see also \cite{BaCa90, daCo95, Laur02, LeTs81, Whit80} for the discrete coagulation-fragmentation equation. Assuming in particular the initial condition $f^{in}$ to be a non-negative function in $L^1((0,\infty),x\mathrm{d}x)$, the solutions to \eqref{a1}-\eqref{a2} constructed in the above mentioned references are either mass-conserving, i.e. satisfy \eqref{a4}, or not, according to the growth of the coagulation kernel $K$ for large sizes or the behaviour of the overall fragmentation rate $a$ for small sizes. Indeed, it is by now well-known that, if the growth of the coagulation kernel for large sizes is sufficiently fast (such as $K(x,y) \ge K_0 (xy)^{\lambda/2}$ for some $\lambda>1$ and $K_0$), then a runaway growth takes place in the system of particles and a \textsl{giant particle} (or particle of infinite size) is formed in finite time, a phenomenon usually referred to as \textsl{gelation} \cite{ELMP03, EMP02, HEZ83, LeTs81, Leyv83}. Since all particles accounted for by the size distribution have finite size, the occurrence of gelation results in a loss of matter in the dynamical behaviour of \eqref{a1}. A somewhat opposite phenomenon takes place when the overall fragmentation rate $a$ blows up as $x\to 0$ (such as $a(x)=a_0 x^\gamma$ for some $\gamma<0$ and $a_0>0$). In that case, the smaller the particles, the faster they split, leading to the instantaneous appearance of \textsl{dust} (or particles of size zero) and again a loss of matter takes place, usually referred to as the \textsl{shattering} transition \cite{ArBa04, Fili61, McZi87}. We shall exclude the occurrence of these phenomena in the forthcoming analysis and focus on a different feature of the fragmentation mechanism, namely the possibility that an infinite number of fragments is produced during breakup. Specifically, a common assumption on the daughter distribution function $b$ in the above mentioned references is its integrability, that is, 
\begin{equation}
n_0(y) := \int_0^y b(x,y)\ \mathrm{d}x < \infty \;\text{ for almost every }\; y>0\ , \label{a5}
\end{equation}
which amounts to require that the splitting of a particle of size $y$ produces $n_0(y)$ daughter particles, a particularly important case being the so-called binary fragmentation corresponding to $n_0(y)=2$ for all $y>0$. Clearly, the integrability property \eqref{a5} fails to be true when $b$ is given by \cite{McZi87}
\begin{equation}
b_\nu(x,y) = (\nu+2) \frac{x^\nu}{y^{\nu+1}}\ , \qquad 0<x<y\ , \qquad \nu\in (-2,-1]\ . \label{a6}
\end{equation}
Observe that the restriction $\nu>-2$ guarantees that $b_\nu$ satisfies \eqref{a3}. 

As far as we know, the existence of weak solutions to the coagulation-fragmentation equation \eqref{a1}-\eqref{a2} when the daughter distribution function $b$ is given by \eqref{a6} and the coagulation kernel is unbounded has not been considered so far, except the particular case $K(x,y)=xy$, $a(x)=x$, and $\nu=-1$ which is handled in \cite{LavR15} by an approach exploiting fully the  specific structure of the coefficients. The purpose of this note is to fill this gap, at least for coagulation kernels growing at most linearly for large sizes. In fact, the main difficulty to be overcome is the following: due to the non-integrability of $b_\nu$, the natural functional setting, which is the space $L^1((0,\infty),(1+x)\mathrm{d}x)$, can no longer be used. As we shall see below, it might be replaced by the smaller space $L^1((0,\infty),(x^m+x)\mathrm{d}x)$ for some $m\in (0,1)$ such that
\begin{equation*}
\int_0^y x^m b(x,y)\ \mathrm{d}x = (\nu+2) y^m \int_0^1 z^{m+\nu}\ \mathrm{d}z< \infty\ , \qquad y>0\ ,
\end{equation*}
that is, $m>-1-\nu\ge 0$. This choice requires however the coagulation kernel $K$ and the overall fragmentation rate $a$ to vanish in an appropriate way for small sizes. More precisely, we assume that there are $K_0>0$ and $m_0\in (-1-\nu,1)$ such that
\begin{subequations}\label{a7}
\begin{equation}
0 \le K(x,y) = K(y,x) \le K_0 (2+x+y)\ , \qquad (x,y)\in (0,\infty)^2\ , \label{a7a}
\end{equation}
and, for all $R>0$, 
\begin{equation}
L_R := \sup_{(x,y)\in (0,R)^2} \left\{ \frac{K(x,y)}{\min\{x,y\}^{m_0}} \right\} < \infty\ . \label{a7b}
\end{equation}
\end{subequations}
We also assume that, for all $R>0$, there is $A_R>0$ such that
\begin{equation}
a(x) \le A_R x^{m_0+\nu+1}\ , \qquad x\in (0,R)\ . \label{a8}
\end{equation}
Roughly speaking, $K$ and $a$ are required to vanish faster for small sizes as the singularity of $b_\nu$. For instance, the coagulation kernel $K$ and the homogeneous overall fragmentation rate $a$ given by
\begin{equation}
K(x,y)=x^\alpha y^\beta +x^\beta y^\alpha\ , \qquad a(x)=x^\gamma\ , \qquad (x,y)\in (0,\infty)^2\ , \label{exKa}
\end{equation} 
with $-1-\nu<\alpha\le\beta\le 1-\alpha$ and $\gamma>0$ satisfy \eqref{a7} and \eqref{a8}, respectively, for any $m_0\in \left( -1-\nu, \max\{ \alpha , \gamma-1-\nu\} \right]$. Observe in particular that no growth constraint is required on $a$ for large sizes. 

Before stating our results, let us introduce some notation: given $m\in \mathbb{R}$, we define the space $X_m$ and its positive cone $X_m^+$ by
\begin{equation*}
X_m := L^1((0,\infty),x^m\mathrm{d}x)\ , \qquad X_m^+ := \{ f\in X_m\ :\ f\ge 0 \;\text{ a.e. in }\; (0,\infty) \} \ , 
\end{equation*}
respectively, and denote the space $X_m$ endowed with its weak topology by $X_{m,w}$. We also put
\begin{equation*}
M_m(f) := \int_0^\infty x^m f(x)\ \mathrm{d}x\ , \qquad f\in X_m\ .
\end{equation*}

We begin with the existence of mass-conserving weak solutions to \eqref{a1}-\eqref{a2} when the initial condition $f^{in}$ lies in $X_{m_0}^+\cap X_1$ for some $m_0\in (-\nu-1,1)$ and decays in a suitable way for large sizes.

\begin{theorem}\label{thmi1}
Let $\nu\in (-2,-1]$. Assume that the coagulation kernel $K$ and the overall fragmentation rate $a$ satisfy \eqref{a7} and \eqref{a8}, respectively, and that the daughter distribution function $b=b_\nu$ is given by \eqref{a6}. Given an initial condition $f^{in}\in X_{m_0}^+\cap X_1$ such that
\begin{equation}
\int_0^\infty x \ln(\ln(x+5)) f^{in}(x)\ \mathrm{d}x < \infty\ , \label{a9}
\end{equation}
there exists at least one weak solution $f\in C([0,\infty); X_{m_0,w}\cap X_{1,w})$ on $[0,\infty)$ to \eqref{a1}-\eqref{a2} such that 
\begin{equation}
M_1(f(t)) = M_1(f^{in})\ , \qquad t\ge 0\ . \label{a10}
\end{equation}
Moreover, if $f^{in}\in X_m$ for some $m>1$, then $f\in L^\infty(0,T;X_m)$ for all $T>0$.
\end{theorem}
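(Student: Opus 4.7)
The proof plan follows the classical strategy of regularize--estimate--compactify--pass to the limit, with the additional care imposed by the non-integrability of $b_\nu$, which forces the natural functional setting to be $X_{m_0}\cap X_1$ instead of the customary $X_0\cap X_1$.

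\textbf{First}, I would introduce a regularized problem in which $b_\nu$ is replaced by an integrable truncation, for instance
$$b^n(x,y) := b_\nu(x,y)\, \mathbf{1}_{(y/n,y)}(x),$$
supplemented, if necessary, by a Dirac-type contribution at $x=y/n$ so that the matter-conservation identity \eqref{a3} is exactly preserved; simultaneously I would truncate $K$ and $a$ into bounded, compactly supported coefficients. Classical results (Stewart, Lauren\c{c}ot--Mischler) then yield a mass-conserving weak solution $f_n\in C([0,\infty);X_{0,w}\cap X_{1,w})$ of the approximate system starting from $f^{in}$.

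\textbf{Second}, I would derive $n$-uniform estimates. The identity
$$\int_0^y x^{m_0} b_\nu(x,y)\,\mathrm{d}x = \frac{\nu+2}{m_0+\nu+1}\, y^{m_0}$$
is finite thanks to $m_0>-1-\nu$; combined with \eqref{a8}, \eqref{a7a}, \eqref{a7b}, and mass conservation $M_1(f_n)(t)=M_1(f^{in})$, it closes a Gronwall argument for $M_{m_0}(f_n)$ on $[0,T]$. A crucial further step propagates the superlinear moment $\int x\,\ln(\ln(x+5))\, f_n\,\mathrm{d}x$ uniformly in $n$, in the spirit of the convexity and de la Vall\'ee-Poussin arguments going back to Ball--Carr. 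Combined with tail control in $X_{m_0}$ obtained from $M_1(f_n)$ together with $m_0<1$, and with equi-integrability near $0$ derived from a slightly lower moment $M_{m_0-\eta}$ for $\eta>0$ small (available because $m_0>-1-\nu$ is strict), these estimates yield weak relative compactness of $\{f_n(t,\cdot)\}$ in $X_{m_0}\cap X_1$ uniformly in $t\in[0,T]$.

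\textbf{Third}, equicontinuity of $t\mapsto\langle f_n(t),\varphi\rangle$ for smooth compactly supported $\varphi$ follows directly from the weak formulation and the moment bounds; an Arzel\`a-Ascoli--Dunford--Pettis argument then extracts a subsequence converging in $C([0,T];X_{m_0,w}\cap X_{1,w})$ to a candidate limit $f$. Passing to the limit in the coagulation term is routine under \eqref{a7a}. I expect the main obstacle to be the passage to the limit in the fragmentation gain term: the non-integrability of $b_\nu$ means that test functions must be compatible with the $X_{m_0}$ framework, and one must leverage \eqref{a8} together with the moment computation above in order to show that $a(y) b^n(x,y) f_n(t,y)$ is a tight family in the right space and that the limit identifies with the expected $a(y) b_\nu(x,y) f(t,y)$.

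\textbf{Finally}, the mass-conservation identity \eqref{a10} is actually a direct consequence of the weak convergence in $X_1$, since $M_1$ is the pairing with the $L^\infty$ element $1$; the true role of the superlinear moment \eqref{a9} is to make the weak compactness used above \emph{tight at infinity}, ruling out leakage of matter via the inequality
$$\int_R^\infty x\, f_n(t,x)\,\mathrm{d}x \leq \frac{1}{\ln(\ln(R+5))}\int_R^\infty x\,\ln(\ln(x+5))\, f_n(t,x)\,\mathrm{d}x,$$
whose right-hand side vanishes as $R\to\infty$ uniformly in $n$ and $t\in[0,T]$. Propagation of $X_m$-regularity for $m>1$ follows from a standard Gronwall argument using the inequality $(x+y)^m-x^m-y^m\leq C_m(x\,y^{m-1}+x^{m-1}\,y)$ in the coagulation term, controlled by $M_1(f_n)\,M_m(f_n)$, while the fragmentation term contributes non-positively to $\mathrm{d}M_m/\mathrm{d}t$ up to a bounded correction when $m>1$.
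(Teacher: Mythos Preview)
Your overall strategy is sound, but there are two substantive gaps.

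First, you underestimate the role of the superlinear moment \eqref{a9}. You describe it only as providing tightness at infinity, but in the paper it plays a second, more essential role: because $W(x)=x\ln(\ln(x+5))-x\ln(\ln 5)$ lies in $\mathcal{C}_{VP,\infty}$, propagating $\int W(x)f_j\,\mathrm{d}x$ (Lemma~\ref{lemb4}) yields as a \emph{byproduct} the integrated-in-time bound
\[
\int_0^T\!\int_0^\infty a(x)\bigl[xW'(x)-W(x)\bigr]f_j(s,x)\,\mathrm{d}x\,\mathrm{d}s\le C(T),
\]
from which one deduces (Corollary~\ref{corb5}) that $\int_0^T\int_1^\infty x^m a(x)f_j\,\mathrm{d}x\,\mathrm{d}s\le C(m,T)$ for every $m\in(0,1)$. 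This is indispensable for closing the $M_{m_0}$ estimate \emph{uniformly} in the approximation parameter, because the hypotheses impose no growth restriction whatsoever on $a$ at infinity: the fragmentation contribution to $\frac{\mathrm{d}}{\mathrm{d}t}M_{m_0}(f_j)$ contains a term of the form $\int_1^\infty y^{m_0}a(y)f_j\,\mathrm{d}y$, which your listed ingredients (\eqref{a8}, \eqref{a7}, mass conservation) cannot control. In the paper the order is therefore $W$-moment first, then $M_{m_0}$; your order is reversed and the first step does not close uniformly.

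Second, your route to weak compactness in $X_{m_0}$ is incomplete. A bound on $M_{m_0-\eta}$ yields tightness at zero and $M_1$ gives tightness at infinity, but neither prevents \emph{concentration} on sets of small measure; Dunford--Pettis requires genuine uniform integrability. The paper obtains this (Lemma~\ref{lemb7}) by selecting, via a refined de la Vall\'ee Poussin theorem, a convex superlinear $\Phi\in\mathcal{C}_{VP,\infty}$ with $\int x^{m_0}\Phi(f^{in})\,\mathrm{d}x<\infty$ and propagating $\int_0^R x^{m_0}\Phi(f_j(t,x))\,\mathrm{d}x$; that argument again leans on Corollary~\ref{corb5} to handle the fragmentation gain. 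Your $M_{m_0-\eta}$ substitute does not supply this. As a minor point, the paper's approximation truncates the domain to $(0,j)$ and leaves $b_\nu$ untouched, which sidesteps the Dirac correction you propose.
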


When $b=b_\nu$ and $\nu>-1$, the existence of mass-conserving weak solutions on $[0,\infty)$ to \eqref{a1}-\eqref{a2} is already known for $f^{in}\in X_0^+\cap X_1$, the assumptions \eqref{a7b} and \eqref{a8} being replaced by the local boundedness of $K$ and $a$  \cite{LaMi02b, Stew89}. In particular, it does not require the additional integrability condition \eqref{a9} on $f^{in}$ and it is yet unclear whether Theorem~\ref{thmi1} is valid under the sole assumption $f^{in}\in X_0^+\cap X_1$. In fact, the condition \eqref{a9} can be replaced by $f^{in}\in L^1((0,\infty),w(x)\mathrm{d}x)$ for any weight function $w$ enjoying the properties listed in Lemma~\ref{lemb3} below, which includes weights involving multiple iterates of the logarithm function. We nevertheless restrict our analysis to the specific choice of the weight function in \eqref{a9} as we have not yet identified an ``optimal'' class of initial conditions for which Theorem~\ref{thmi1} is true.

The proof of Theorem~\ref{thmi1} proceeds along the lines of the existence proofs performed in \cite{LaMi02b, Stew89}, the main differences lying in the control of the behaviour for small sizes and its consequences on the behaviour for large sizes. It relies on a weak compactness method in $X_{m_0}$, the starting point being the construction of a truncated version of \eqref{a1} for which well-posedness can be established by a classical Banach fixed point argument. The compactness approach involves five steps: we first show that the additional integrability assumption \eqref{a9} is preserved throughout time evolution and additionally provides a control on the behaviour of the fragmentation term for large sizes whatever the growth of the overall fragmentation rate $a$. We next turn to the behaviour for small sizes and prove boundedness in $X_{m_0}$, the outcome of the previous step being used to control the contribution of the fragmentation gain term. The third and fourth steps are more classical and devoted to the uniform integrability and the time equicontinuity in $X_{m_0}$, respectively. In the last step, we gather the outcome of the four previous ones to show the convergence of the solutions of the truncated problem as the truncation parameter increases without bound and that the limit thus obtained is a weak solution on $[0,\infty)$ to \eqref{a1}-\eqref{a2} which satisfies the conservation of matter \eqref{a10}. We finally use the same argument as in the first step to prove the stability of $X_m$ for $m>1$.

\begin{remark}\label{remi3}
For simplicity, the analysis carried out in this paper is restricted to the daughter distribution functions given by \eqref{a6}. The proof of Theorem~\ref{thmi1} can however be adapted to encompass a broader class of daughter distribution functions $b$ featuring a non-integrable singularity for  small fragment sizes, see \cite{BLLxx}. In the same vein, existence of weak solutions (not necessarily mass-conserving) can be proved by a similar approach when the coagulation kernel $K$ features a faster growth than \eqref{a7a} and we refer to \cite{BLLxx} for a more complete account.
\end{remark}

We supplement the existence result with a uniqueness result, which is however only valid for a smaller class of coagulation kernels $K$ and initial conditions $f^{in}$.

\begin{theorem}\label{thmi2}
Let $\nu\in (-2,-1]$ and $\delta\in (0,1)$. Assume that the coagulation kernel $K$ and the overall fragmentation rate $a$ satisfy \eqref{a7} and \eqref{a8}, respectively, and that the daughter distribution function $b=b_\nu$ is given by \eqref{a6}. Assume also that there is $K_1>0$ such that
\begin{equation}
K(x,y)\le K_1 x^{m_0} y\ , \qquad (x,y)\in (0,1)\times (1,\infty)\ . \label{u0}
\end{equation}
Given an initial condition $f^{in}\in X_{m_0}^+\cap X_{2+\delta}$, there is a unique mass-conserving weak solution $f$ on $[0,\infty)$ to \eqref{a1}-\eqref{a2} such that $f\in L^\infty(0,T;X_{2+\delta})$ for all $T>0$.
\end{theorem}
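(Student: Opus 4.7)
The proof splits into existence and uniqueness.

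\emph{Existence.} The hypothesis $f^{in}\in X_{m_0}^+\cap X_{2+\delta}$ entails $f^{in}\in X_1$ since $x\le x^{m_0}+x^{2+\delta}$ for all $x>0$ (compare on $(0,1]$ and $[1,\infty)$ separately), and similarly $x\ln\ln(x+5)\le C_\delta(x^{m_0}+x^{2+\delta})$ for some $C_\delta>0$ because $\ln\ln(x+5)$ is bounded near $0$ and dominated by $x^{\delta/2}$ for $x$ large. Hence \eqref{a9} holds and Theorem~\ref{thmi1} provides a mass-conserving weak solution $f\in C([0,\infty);X_{m_0,w}\cap X_{1,w})$; its moreover clause, applied with $m=2+\delta>1$, yields $f\in L^\infty(0,T;X_{2+\delta})$ for all $T>0$.

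\emph{Uniqueness: setup and fragmentation.} Let $f_1,f_2$ be two such solutions and set $g:=f_1-f_2$, $F:=f_1+f_2$. With the weight $w(x):=x^{m_0}+\kappa\,x^{1+\delta}$ (where $\kappa>0$ will be fixed below), the functional $U(t):=\int_0^\infty w(x)|g(t,x)|\,\mathrm{d}x$ is well defined because $x^{1+\delta}\le x^{m_0}+x^{2+\delta}$. By Kato's inequality combined with the weak formulation of \eqref{a1a},
\[\dot U(t)\le\int_0^\infty w(x)\,\mathrm{sign}(g(t,x))\,\bigl[(\mathcal{C}f_1-\mathcal{C}f_2)(x)+\mathcal{F}g(x)\bigr]\,\mathrm{d}x.\]
The moment identity $\int_0^y x^\sigma b_\nu(x,y)\,\mathrm{d}x=\frac{\nu+2}{\sigma+\nu+1}y^\sigma$, applied with $\sigma\in\{m_0,1+\delta\}$ after Fubini, bounds the fragmentation contribution by
\[\frac{1-m_0}{m_0+\nu+1}\int_0^\infty a(y)y^{m_0}|g(y)|\,\mathrm{d}y\;-\;\kappa\,\frac{\delta}{2+\delta+\nu}\int_0^\infty a(y)y^{1+\delta}|g(y)|\,\mathrm{d}y.\]
On $(0,1)$, \eqref{a8} yields $a(y)y^{m_0}\le A_1 y^{2m_0+\nu+1}\le A_1 y^{m_0}$ (since $m_0>-1-\nu$ and $y\le 1$), so the positive contribution there is dominated by $A_1 U(t)$. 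On $(1,\infty)$, the inequality $y^{m_0}\le y^{1+\delta}$ allows the remaining positive contribution to be absorbed into the negative one as soon as $\kappa\ge(1-m_0)(2+\delta+\nu)/[\delta(m_0+\nu+1)]$, a value we henceforth fix.

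\emph{Coagulation estimate and Gronwall.} The coagulation contribution is handled through the bilinear decomposition of $\mathcal{C}f_1-\mathcal{C}f_2$ in $g$ and $F$, the sign-bounds on each piece, and the estimate $w(x+y)\le 2^\delta[w(x)+w(y)]$ (subadditivity of $x\mapsto x^{m_0}$ combined with the convexity bound $(x+y)^{1+\delta}\le 2^\delta(x^{1+\delta}+y^{1+\delta})$). This reduces matters to double integrals of the form $\iint K(x,y)F(y)|g(x)|\phi(x,y)\,\mathrm{d}x\,\mathrm{d}y$ with $\phi$ a polynomial of total degree at most $1+\delta$. The delicate region is $(x,y)\in(0,1)\times(1,\infty)$, where the crude linear bound \eqref{a7a} would require the unavailable moment $\int y^{3+\delta}F\,\mathrm{d}y$; this is precisely where the extra hypothesis \eqref{u0} rescues the estimate by giving $K(x,y)\le K_1 x^{m_0}y$, which pairs against $M_{2+\delta}(F)$ to yield a bound by $K_1 M_{2+\delta}(F)\cdot U(t)$. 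The other regions $(0,1)^2$, $(1,\infty)\times(0,1)$ (treated by symmetry of $K$), and $(1,\infty)^2$ are estimated using \eqref{a7b}, \eqref{a7a}, and the $L^\infty(0,T)$-bounds on $M_{m_0}(F)$, $M_1(F)$, $M_{2+\delta}(F)$ provided by the existence step. All together $\dot U(t)\le c(t)U(t)$ with $c\in L^\infty(0,T)$, and since $U(0)=0$, Gronwall's inequality yields $U\equiv 0$, that is, $f_1=f_2$.

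The main obstacle is the design of the weight $w$. The $x^{m_0}$-component is forced by the non-integrability of $b_\nu$ near $0$, but it generates a positive fragmentation contribution $\int a(y)y^{m_0}|g(y)|\,\mathrm{d}y$ that, at large sizes, cannot be bounded by $U(t)$ alone because $a$ is unrestricted at infinity. The purpose of the superlinear term $\kappa x^{1+\delta}$ is precisely to absorb this defect via its strictly negative fragmentation coefficient $-\delta/(2+\delta+\nu)$; it is this cancellation mechanism that makes the additional hypothesis $f^{in}\in X_{2+\delta}$ with $\delta>0$ both natural and essential, and the sharper coagulation bound \eqref{u0} is the complementary ingredient making the mixed small-$x$/large-$y$ coagulation estimate close.
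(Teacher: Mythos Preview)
Your overall strategy coincides with the paper's: a weighted $L^1$ Gronwall argument with a weight combining the sublinear power $x^{m_0}$ (forced by the non-integrability of $b_\nu$) and the superlinear power $x^{1+\delta}$ (to absorb the positive fragmentation contribution at large sizes). The paper uses $\xi(x)=\max\{x^{m_0},x^{1+\delta}\}$ while you use $w(x)=x^{m_0}+\kappa\,x^{1+\delta}$; these are equivalent, and your tuning parameter $\kappa$ yields a cleaner fragmentation absorption on $(1,\infty)$ than the paper's additional threshold $Y$ in its case~(f2). The existence part and the region-by-region plan for the coagulation term also match the paper.

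There is, however, a genuine gap in your coagulation estimate on $(1,\infty)^2$. The bound $w(x+y)\le 2^\delta[w(x)+w(y)]$ that you invoke discards the cancellation coming from the $-w(x)$ term in $\Xi_0(x,y):=w(x+y)-w(x)+w(y)$ and leaves a pure $x^{1+\delta}$ contribution. After multiplication by $K(x,y)\le 2K_0(x+y)$, this produces a term of order $x^{2+\delta}$, hence a factor $\int_1^\infty x^{2+\delta}|g(x)|\,\mathrm{d}x$ which is \emph{not} controlled by $U(t)$, and Gronwall does not close. Your phrase ``polynomial of total degree at most $1+\delta$'' does not save this: a pure $x^{1+\delta}$ monomial has total degree $1+\delta$ but still creates $x^{2+\delta}$ once multiplied by $K$. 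The remedy---and this is what the paper does in its cases (c2)--(c4)---is to keep the $-w(x)$ and use the mean-value bound
\[
(x+y)^{1+\delta}-x^{1+\delta}\le(1+\delta)(x+y)^\delta y\le (1+\delta)\bigl(x^\delta+y^\delta\bigr)y,
\]
so that the $x$-degree of $\Xi_0$ never exceeds $\delta$. Then $K(x,y)\,\Xi_0(x,y)\le C\,w(x)\bigl(y^{m_0}+y^{2+\delta}\bigr)$ on each of the four regions, and the argument closes exactly as you describe using the $L^\infty(0,T)$-bounds on $M_{m_0}(F)$ and $M_{2+\delta}(F)$.
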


Observing that any function $f^{in}\in X_{m_0}^+\cap X_{2+\delta}$ satisfies \eqref{a9}, the existence part of Theorem~\ref{thmi2} readily follows from Theorem~\ref{thmi1}. As Theorem~\ref{thmi1}, Theorem~\ref{thmi2} applies to \eqref{a1}-\eqref{a2} when the coagulation kernel $K$ and the overall fragmentation rate $a$ are given by \eqref{exKa} with $-1-\nu < \alpha \le \beta \le 1-\alpha$, $\gamma>0$, and $m_0\in \left( -1-\nu, \max\{ \alpha , \gamma-1-\nu\} \right]$. We also mention that, when $b=b_\nu$ and $\nu>-1$, Theorem~\ref{thmi2} is valid without the assumption \eqref{u0} on $K$ and for $f^{in}\in X_0^+ \cap X_2$ \cite{LaMi04, Norr99}.

As in \cite{EMRR05, LaMi04, Norr99, Stew90b}, the uniqueness proof relies on a control of the distance between two solutions in a weighted $L^1$-space, the delicate point being the choice of an appropriate weight which turns out to be $\xi(x):=\max\{ x^{m_0} , x^{1+\delta}\}$, $x>0$, here. The superlinearity of $\xi$ for large sizes compensates the sublinearity of $\xi$ for small sizes which gives a positive contribution of the fragmentation term.

\section{Existence}\label{s2}

\newcounter{NumConst}

Throughout this section, the parameter $\nu\in (-2,-1]$ is fixed and we assume that the coagulation kernel $K$ and the overall fragmentation rate $a$ satisfy \eqref{a7} and \eqref{a8}, respectively, while the daughter distribution function $b=b_\nu$ is given by \eqref{a6}. Also, $f^{in}$ is a function in $X_{m_0}^+\cap X_1$ enjoying the additional integrability property \eqref{a9} and we set $\varrho := M_1(f^{in})$.

\subsection{Preliminaries}\label{s2.1}

Let us first begin with the definition of a weak solution to \eqref{a1}-\eqref{a2}. 

\begin{definition}\label{defb0}
Let $T\in (0,T]$. A weak solution on $[0,T)$ to \eqref{a1}-\eqref{a2} is a non-negative function $f\in C([0,\infty); X_{m_0,w}\cap X_{1,w})$ on $[0,\infty)$ such that, for all $t\in (0,T)$ and $\vartheta\in\Theta_{m_0}$, 
\begin{align}
\int_0^\infty (f(t,x)-f^{in}(x)) \vartheta(x)\ \mathrm{d}x & = \frac{1}{2} \int_0^\infty \int_0^\infty K(x,y) \chi_\vartheta(x,y) f(t,x) f(t,y)\ \mathrm{d}y\mathrm{d}x \nonumber\\
& \qquad - \int_0^\infty a(x) N_\vartheta(x) f(t,x)\ \mathrm{d}x\ , \label{b0}
\end{align}
where
\begin{align*}
\chi_\vartheta(x,y) & := \chi(x+y) -\chi(x) - \chi(y)\ , \qquad (x,y)\in (0,\infty)^2\ , \\
N_\vartheta(y) & := \vartheta(y) - \int_0^y \vartheta(x) b_\nu(x,y)\ \mathrm{d}x\ , \qquad y>0\ ,
\end{align*}
and
\begin{equation*}
\Theta_{m_0} := \left\{ \vartheta\in C^{m_0}([0,\infty))\cap L^\infty(0,\infty)\ :\ \vartheta(0) = 0 \right\}\ .
\end{equation*}
\end{definition}

Observe that, for $\vartheta\in \Theta_{m_0}$ and $(x,y)\in (0,\infty)^2$, 
\begin{equation*}
|\chi_\vartheta(x,y)| \le 2\|\vartheta\|_{C^{m_0}} \min\{x,y\}^{m_0} \;\text{ and }\; |N_\vartheta(y)| \le \left( 1 + \frac{\nu + 2}{\nu+m_0+1} \right) \|\vartheta\|_{C^{m_0}} y^{m_0} \ , 
\end{equation*}
so that the right-hand side of \eqref{b0} is well-defined.

We next define a particular class of convex functions on $[0,\infty)$ which proves useful in the forthcoming analysis.

\begin{definition}\label{defb1}
	A non-negative and convex function $\varphi\in C^\infty([0,\infty))$ belongs to the class $\mathcal{C}_{VP,\infty}$ if it satisfies the following properties:
	\begin{itemize}
		\item[(a)] $\varphi(0)=\varphi'(0)=0$ and $\varphi'$ is concave;
		\item[(b)] $\lim_{r\to\infty} \varphi'(r) = \lim_{r\to\infty} \varphi(r)/r=\infty$;
		\item[(c)] for $p\in (1,2)$, 
		\begin{equation*}
		S_p(\varphi) := \sup_{r\ge 0}\left\{ \frac{\varphi(r)}{r^p} \right\} < \infty\ .
		\end{equation*}
	\end{itemize}
\end{definition}

For instance, $r\mapsto (r+1)\ln(r+1)-r$ lies in $\mathcal{C}_{VP,\infty}$. Functions in the class $\mathcal{C}_{VP,\infty}$ enjoy several properties which we list now.

\begin{lemma}\label{lemb2}
	Let $\varphi\in \mathcal{C}_{VP,\infty}$. Then
	\begin{itemize}
		\item[(a)] $r\mapsto \varphi(r)/r$ is concave on $[0,\infty)$;
		\item[(b)] for $r\ge 0$, 
		\begin{equation*}
		0 \le \varphi(r) \le r\varphi'(r) \le 2 \varphi(r) \ ;
		\end{equation*}
		\item[(c)] for $(r,s)\in [0,\infty)^2$, 
		\begin{equation}
		s \varphi'(r)  \le \varphi(r) + \varphi(s)\ , \label{x1}
		\end{equation}
		and
		\begin{align}
		0 \le \varphi(r+s) - \varphi(r) - \varphi(s) & \le 2 \frac{s \varphi(r) + r \varphi(s)}{r+s}\ , \label{x2} \\
		0 \le \varphi(r+s) - \varphi(r) - \varphi(s) & \le \varphi''(0) rs \label{x3}\ .
		\end{align}
	\end{itemize}
\end{lemma}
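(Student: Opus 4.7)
\textbf{Proof plan for Lemma~\ref{lemb2}.} The arguments rely on two standard formulas: the representation $\varphi(r)/r = \int_0^1 \varphi'(ur)\,\mathrm{d}u$, obtained by the change of variables $t = ur$ in $\varphi(r) = \int_0^r \varphi'(t)\,\mathrm{d}t$, and the double integral identity
\begin{equation*}
\varphi(r+s) - \varphi(r) - \varphi(s) = \int_0^s \int_0^r \varphi''(u+v)\,\mathrm{d}u\,\mathrm{d}v,
\end{equation*}
valid for smooth $\varphi$ with $\varphi(0) = \varphi'(0) = 0$. With these at hand, the three items can be handled in turn.

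For (a), I use the first formula together with the observation that, for fixed $u\in[0,1]$, the map $r\mapsto \varphi'(ur)$ is concave since it is the composition of a linear map with the concave function $\varphi'$; averaging over $u$ preserves concavity, and the value at $r=0$ is $\varphi'(0)=0$. For (b), non-negativity is built into the definition, and $\varphi(r)\le r\varphi'(r)$ follows from $\varphi(r) = \int_0^r \varphi'(t)\,\mathrm{d}t$ together with monotonicity of $\varphi'$. For the upper bound $r\varphi'(r)\le 2\varphi(r)$, concavity of $\varphi'$ combined with $\varphi'(0)=0$ yields the chord inequality $\varphi'(t)\ge (t/r)\varphi'(r)$ for $t\in[0,r]$, which upon integration gives $\varphi(r)\ge r\varphi'(r)/2$. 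The Young-type inequality \eqref{x1} is then immediate from convexity, $\varphi(s)\ge \varphi(r)+\varphi'(r)(s-r)$, followed by the bound $r\varphi'(r)\le 2\varphi(r)$ just established.

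The superadditivity $\varphi(r+s)\ge \varphi(r)+\varphi(s)$ in \eqref{x2} follows from convexity with $\varphi(0)=0$ by writing $\varphi(r)\le (r/(r+s))\varphi(r+s)$ and similarly for $s$. For \eqref{x3}, I exploit that $\varphi'$ concave means $\varphi''$ decreasing, so $\varphi''(u+v)\le \varphi''(0)$, and the double integral identity yields $rs\varphi''(0)$ directly. The main obstacle is the upper bound in \eqref{x2}: a naive use of subadditivity of the concave map $r\mapsto\varphi(r)/r$ gives $(s^2\varphi(r)+r^2\varphi(s))/(rs)$, which is not comparable to the target $2(s\varphi(r)+r\varphi(s))/(r+s)$. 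The trick is to split the double integral with convex weights,
\begin{equation*}
\int_0^s\!\!\int_0^r \varphi''(u+v)\,\mathrm{d}u\,\mathrm{d}v = \frac{r}{r+s}\int_0^s\!\!\int_0^r \varphi''(u+v)\,\mathrm{d}u\,\mathrm{d}v + \frac{s}{r+s}\int_0^s\!\!\int_0^r \varphi''(u+v)\,\mathrm{d}u\,\mathrm{d}v,
\end{equation*}
and to bound the first copy by $\varphi''(u)$ and the second by $\varphi''(v)$ (both legitimate since $\varphi''$ is decreasing). This produces $rs(\varphi'(r)+\varphi'(s))/(r+s)$, at which point applying the second half of (b) in the form $r\varphi'(r)\le 2\varphi(r)$ and $s\varphi'(s)\le 2\varphi(s)$ gives exactly the claimed bound $2(s\varphi(r)+r\varphi(s))/(r+s)$.
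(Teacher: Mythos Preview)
Your argument is correct throughout. The paper itself only spells out the proof of \eqref{x3} --- using exactly the double integral identity and the bound $\varphi''(u+v)\le\varphi''(0)$ that you give --- and defers all remaining items to \cite[Proposition~14]{Laur15}. Your treatment of (a), (b), \eqref{x1}, and the superadditivity in \eqref{x2} is standard and unproblematic; your handling of the upper bound in \eqref{x2} via the convex-combination splitting of the double integral, followed by $\varphi''(u+v)\le\varphi''(u)$ on one copy and $\varphi''(u+v)\le\varphi''(v)$ on the other and then part~(b), is a clean way to reach the stated constant~$2$. So rather than differing from the paper, your proof supplies the details the paper outsources.
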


\begin{proof}
Since 
\begin{equation*}
\varphi(r+s) - \varphi(r) - \varphi(s) = \int_0^r \int_0^s \varphi''(r_*+s_*)\ \mathrm{d}s_* \mathrm{d}r_*\ , \qquad (r,s)\in [0,\infty)^2\ ,
\end{equation*}
the inequality \eqref{x3} readily follows from the concavity of $\varphi'$. The other properties listed in Lemma~\ref{lemb2} are consequences of the convexity of $\varphi$ and the concavity of $\varphi'$ and are proved, for instance, in \cite[Proposition~14]{Laur15}.
\end{proof}

We finally collect some properties of the weight involved in the assumption \eqref{a9}.
\begin{lemma}\label{lemb3}
Define $W(x) := x \ln(\ln(x+5)) - x \ln(\ln(5))$ for $x\ge 0$. Then $W\in \mathcal{C}_{VP,\infty}$ and, for all $m\in [0,1)$,
\begin{equation*}
\lim_{x\to\infty} \frac{x W'(x)-W(x)}{x^m} = \infty\ .
\end{equation*}
\end{lemma}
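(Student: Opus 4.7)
My plan is to verify each of the three defining conditions of $\mathcal{C}_{VP,\infty}$ (Definition~\ref{defb1}) by direct computation on $W$, exploiting the factorisation $W(x) = x[\ln L(x) - \ln\ln 5]$ with $L(x) := \ln(x+5)$, and to derive the final limit from an explicit identity for $xW'(x) - W(x)$.

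I would begin with a single differentiation, which gives
\begin{equation*}
W'(x) = \ln L(x) - \ln\ln 5 + \frac{x}{(x+5)L(x)}\ .
\end{equation*}
From this, $W \in C^\infty([0,\infty))$, $W(0) = W'(0) = 0$, $W'(x) \to \infty$, and $W(x)/x = \ln L(x) - \ln\ln 5 \to \infty$ as $x \to \infty$, which disposes of the first half of property~(a) and all of property~(b). Rearranging the same formula yields the identity $xW'(x) - W(x) = x^2/[(x+5)L(x)]$, so
\begin{equation*}
\frac{xW'(x) - W(x)}{x^m} = \frac{x^{2-m}}{(x+5)\ln(x+5)} \longrightarrow \infty \;\text{ as }\; x \to \infty
\end{equation*}
for every $m \in [0,1)$, which settles the second assertion of the lemma. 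Property~(c) is equally straightforward, since $W(r)/r^p = r^{1-p}[\ln L(r) - \ln\ln 5]$ is continuous on $[0,\infty)$, vanishes at $r = 0$ (because $W(r) = O(r^2)$ near the origin thanks to $W(0) = W'(0) = 0$), and vanishes as $r \to \infty$ whenever $p > 1$, so $S_p(W) < \infty$.

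The remaining, and genuinely substantive, task is to show that $W$ is convex and that $W'$ is concave, i.e.\ that $W'' \ge 0$ and $W''' \le 0$ on $[0,\infty)$. Two more differentiations produce the explicit formulas
\begin{equation*}
W''(x) = \frac{(x+10) L(x) - x}{(x+5)^2 L(x)^2}\ , \qquad W'''(x) = \frac{2x - (x+15) L(x)^2 - 15 L(x)}{(x+5)^3 L(x)^3}\ ,
\end{equation*}
and since $L(x) > 0$ throughout, the required signs reduce to the pointwise inequalities $(x+10) L(x) \ge x$ and $(x+15) L(x)^2 + 15 L(x) \ge 2x$ on $[0,\infty)$. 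I would verify each in the same fashion: check positivity at $x = 0$ (where both are strict because $L(0) = \ln 5 > 1$), then differentiate and show the derivative of the difference is non-negative on $[0,\infty)$, using only $L'(x) = 1/(x+5)$ and the bound $L(x)^2 \ge (\ln 5)^2 > 2$. The second inequality is the main obstacle, since the positive term $2x$ threatens the sign, but the dominance $L^2 - 2 \ge (\ln 5)^2 - 2 > 0$ already at the origin makes the comparison go through.
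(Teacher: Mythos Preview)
Your proof is correct. The paper states Lemma~\ref{lemb3} without proof, treating it as an elementary computation; your direct verification---computing $W'$, $W''$, $W'''$ explicitly, reducing the sign conditions to the two pointwise inequalities $(x+10)L(x)\ge x$ and $(x+15)L(x)^2+15L(x)\ge 2x$, and checking these via positivity at the origin together with positivity of their derivatives (the latter using $(\ln 5)^2>2$)---is exactly the intended argument, and the identity $xW'(x)-W(x)=x^2/[(x+5)L(x)]$ cleanly gives the final limit.
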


\subsection{Approximation}\label{s2.2}

Let $j\ge 2$ be an integer and set
\begin{equation}
K_j(x,y) := K(x,y) \mathbf{1}_{(0,j)}(x+y)\ , \qquad a_j(x) := a(x) \mathbf{1}_{(0,j)}(x)\ , \label{b2} 
\end{equation}
and
\begin{equation}
f_j^{in}(x) := f^{in}(x) \mathbf{1}_{(0,j)}(x)\label{b3}
\end{equation}
for $(x,y)\in (0,\infty)^2$. We denote the coagulation and fragmentation operators with $K_j$ and $a_j$ instead of $K$ and $a$ by $\mathcal{C}_j$ and $\mathcal{F}_j$, respectively. Simple computations along with \eqref{a7b} and the boundedness of $a_j$ resulting from \eqref{a8} show that both $\mathcal{C}_j$ and $\mathcal{F}_j$ are locally Lipschitz continuous from $L^1((0,j),x^{m_0}\mathrm{d}x)$ into itself \cite{BLLxx}. Thanks to these properties, we are in a position to apply the Banach fixed point theorem to prove that there is a unique non-negative function 
\begin{equation*}
f_j\in C^1([0,\infty),L^1((0,j),x^{m_0}\mathrm{d}x)) 
\end{equation*}
solving
\begin{subequations}\label{b5}
	\begin{align}
	\partial_t f_j(t,x) & = \mathcal{C}_j f_j(t,x) + \mathcal{F}_j f_j(t,x)\ , \qquad (t,x) \in (0,\infty)\times (0,j)\ , \label{b5a} \\
	f_j(0,x) & = f_j^{in}(x)\ , \qquad x\in (0,j)\ . \label{b5b}
	\end{align}
\end{subequations}
Introducing the space $\Theta_{m_0,j} := \{ \vartheta\in C^{m_0}([0,j])\ :\ \vartheta(0)=0\}$, it readily follows from \eqref{b2} and \eqref{b5} that $f_j$ satisfies the following weak formulation of \eqref{b5}: for all $\vartheta\in \Theta_{m_0,j}$ and $t>0$,
\begin{align}
\frac{\mathrm{d}}{\mathrm{d}t} \int_0^j \vartheta(x) f_j(t,x)\ \mathrm{d}x & = \frac{1}{2} \int_0^j \int_0^{j-x} K(x,y) \chi_\vartheta(x,y) f_j(t,x) f_j(t,y)\ \mathrm{d}y\mathrm{d}x \nonumber\\
& \qquad - \int_0^j a(x) N_\vartheta(x) f_j(t,x)\ \mathrm{d}x\ , \label{b6}
\end{align} 
the functions $\chi_\vartheta$ and $N_\vartheta$ being defined in Definition~\ref{defb0}. Choosing $\vartheta(x)=x$, $x\in (0,j)$, in \eqref{b6} readily gives the conservation of matter
\begin{equation*}
\int_0^j x f_j(t,x)\ \mathrm{d}x = \int_0^j x f^{in}(x)\ \mathrm{d}x\ , \qquad t\ge 0\ .
\end{equation*}
Extending $f_j$ to $[0,\infty)\times (j,\infty)$ by zero (i.e. $f_j(t,x)=0$ for $(t,x)\in [0,\infty)\times (j,\infty)$), the previous identity reads
\begin{equation}
M_1(f_j(t)) = M_1(f_j^{in}) \le \varrho = M_1(f^{in})\ , \qquad t\ge 0\ . \label{b7}
\end{equation}

In addition, introducing
\begin{equation}
C_0 := M_{m_0}(f^{in}) + \int_0^\infty W(x) f^{in}(x)\ \mathrm{d}x < \infty\ , \label{b8}
\end{equation}
which is finite according to the integrability properties of $f^{in}$ and in particular \eqref{a9}, we infer from \eqref{b3}, \eqref{b8}, and the non-negativity of $W$ that 
\begin{equation}
M_{m_0}(f_j^{in}) + \int_0^\infty W(x) f_j^{in}(x)\ \mathrm{d}x \le C_0\ . \label{b9}
\end{equation}

We now investigate the weak compactness features of the sequence $(f_j)_{j\ge 2}$. In the sequel, $C$ and $C_i$, $i\ge 1$, denote positive constants depending on $K$, $a$, $\nu$, $m_0$, $f^{in}$, $\varrho$, and $C_0$. Dependence upon additional parameters is indicated explicitly. 

\subsection{Moment Estimates}\label{s2.3}

We start with a control on the behaviour for large sizes. 

\begin{lemma}\label{lemb4}
\refstepcounter{NumConst}\label{cst1}
Let $T>0$. There is $C_{\ref{cst1}}(T)>0$ depending on $T$ such that, for $t\in [0,T]$, 
\begin{equation*}
\int_0^\infty W(x) f_j(t,x)\ \mathrm{d}x + \int_0^t \int_0^\infty a(x) [x W'(x) - W(x)] f_j(s,x)\ \mathrm{d}x\mathrm{d}s  \le C_{\ref{cst1}}(T)\ .
\end{equation*}
\end{lemma}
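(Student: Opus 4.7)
The plan is to test the weak formulation \eqref{b6} against $\vartheta=W|_{[0,j]}$, which belongs to $\Theta_{m_0,j}$ since $W$ is smooth with $W(0)=W'(0)=0$, and to derive a differential inequality of the form
\begin{equation*}
\frac{\mathrm{d}}{\mathrm{d}t}\int_0^\infty W(x) f_j(t,x)\,\mathrm{d}x + \frac{1}{\nu+3}\int_0^\infty a(x)[xW'(x)-W(x)] f_j(t,x)\,\mathrm{d}x \le C_1\int_0^\infty W(x) f_j(t,x)\,\mathrm{d}x + C_2,
\end{equation*}
to which a Gronwall argument, with initial datum controlled by \eqref{b9}, followed by a time integration will deliver the bound announced in Lemma~\ref{lemb4}.

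For the coagulation contribution I would split the double integral at the threshold $x+y=1$. On $\{x+y\ge 1\}$, combining \eqref{x2} from Lemma~\ref{lemb2} with $K(x,y)\le K_0(2+x+y)\le 3K_0(x+y)$ gives $K\chi_W \le 6 K_0 [yW(x)+xW(y)]$, so this portion is bounded by a multiple of $\varrho \int_0^\infty W f_j\,\mathrm{d}x$ thanks to the mass conservation \eqref{b7}. On $\{x+y<1\}$, the coercive bound \eqref{x3} together with $K\le 3K_0$ yields $K\chi_W \le 3K_0 W''(0) xy$, whose integral against $f_j\otimes f_j$ is controlled by $3K_0 W''(0)\varrho^2$, again by \eqref{b7}. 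Notably this step does not use \eqref{a7b}.

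The step I expect to be the main technical point is the lower bound $N_W(y) \ge (\nu+3)^{-1}[yW'(y)-W(y)]$ for $y>0$. To establish it I would introduce $G(r):=W(r)/r$ for $r>0$ with $G(0):=0$, which is concave and non-decreasing by Lemma~\ref{lemb2}(a) and the convexity of $W$, together with the probability density $p_y(x):= xb_\nu(x,y)/y$ on $(0,y)$ (a probability density thanks to \eqref{a3}). Rewriting $\int_0^y W(x) b_\nu(x,y)\,\mathrm{d}x = y \int_0^y G(x) p_y(x)\,\mathrm{d}x$ and applying Jensen's inequality to the concave function $G$ gives this integral $\le y G(\bar x)$ where $\bar x = \int_0^y x p_y(x)\,\mathrm{d}x = (\nu+2)y/(\nu+3)=:cy$, as a direct computation with \eqref{a6} shows. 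Hence $N_W(y)\ge y[G(y)-G(cy)]$, and since $G'$ is non-increasing with $yG'(y) = [yW'(y)-W(y)]/y$, the elementary bound $G(y)-G(cy) \ge (1-c)yG'(y)$ delivers the claim with constant $1-c = 1/(\nu+3)\in[1/2,1)$.

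Inserting the two estimates into \eqref{b6} tested with $\vartheta=W$ produces the announced differential inequality. Dropping the non-negative dissipation term and applying Gronwall first yields a uniform bound on $\int_0^\infty W(x) f_j(t,x)\,\mathrm{d}x$ on $[0,T]$; re-integrating the full inequality on $(0,t)$ then absorbs the time integral of $\int_0^\infty W f_j\,\mathrm{d}x$ and produces the estimate of Lemma~\ref{lemb4}.
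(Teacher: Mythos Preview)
Your proposal is correct and follows essentially the same approach as the paper: test \eqref{b6} with $\vartheta=W$, bound the coagulation term via \eqref{x2}--\eqref{x3} together with \eqref{a7a} and mass conservation \eqref{b7}, and derive the fragmentation lower bound $N_W(y)\ge (\nu+3)^{-1}[yW'(y)-W(y)]$ from the concavity of $x\mapsto W(x)/x$. The only cosmetic differences are that the paper splits $K_0(2+x+y)=2K_0+K_0(x+y)$ rather than splitting the domain at $x+y=1$, and obtains the fragmentation bound by applying the tangent inequality $W_1(y)-W_1(x)\ge W_1'(y)(y-x)$ pointwise and integrating (your Jensen step is a harmless detour to the same conclusion).
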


\begin{proof}
On the one hand, since $W\in \mathcal{C}_{VP,\infty}$ by Lemma~\ref{lemb3}, we infer from \eqref{a7a}, \eqref{x2}, and \eqref{x3} that, 
\begin{equation}
K(x,y) \chi_W(x,y) \le 2K_0 W''(0) xy + 2K_0 [xW(y) + yW(x)] \label{b10}
\end{equation}
for $(x,y)\in (0,\infty)^2$. On the other hand, the function $W_1: x \mapsto W(x)/x$ is concave according to Lemma~\ref{lemb2}~(a) and it follows from \eqref{a6} that, for $y>0$,
\begin{align}
N_W(y) & = \int_0^y [W_1(y) - W_1(x)] x b_\nu(x,y)\ \mathrm{d}x \ge  \int_0^y W_1'(y) x(y-x) b_\nu(x,y)\ \mathrm{d}x \nonumber\\ 
& \ge \frac{y W'(y) - W(y)}{y^2} \int_0^y x(y-x) b_\nu(x,y)\ \mathrm{d}x = \frac{yW'(y)-W(y)}{\nu+3}\ . \label{b11}
\end{align}
Combining \eqref{b6} with $\vartheta=W$, \eqref{b10}, and \eqref{b11} gives, for $t>0$,
\begin{align*}
& \frac{\mathrm{d}}{\mathrm{d}t} \int_0^\infty W(x) f_j(t,x)\ \mathrm{d}x \\ 
& \qquad \le K_0 \int_0^j \int_0^{j-x} \left[ W''(0)xy + xW(y) + y W(x) \right] f_j(t,x) f_j(t,y)\ \mathrm{d}y\mathrm{d}x \\
& \qquad \qquad - \frac{1}{\nu+3} \int_0^j a(y) [yW'(y)-W(y)] f_j(t,y)\ \mathrm{d}y\ .
\end{align*}
We further deduce from Lemma~\ref{lemb2}~(b) and \eqref{b7} that
\begin{align*}
\frac{\mathrm{d}}{\mathrm{d}t} \int_0^\infty W(x) f_j(t,x)\ \mathrm{d}x & + \frac{1}{2} \int_0^\infty a(y) [yW'(y)-W(y)] f_j(t,y)\ \mathrm{d}y \\
& \le K_0 W''(0)\varrho^2 + 2 K_0 \varrho \int_0^\infty W(x) f_j(t,x)\ \mathrm{d}x\ ,
\end{align*}
Integrating the previous differential inequality and using \eqref{b9} complete the proof.
\end{proof}

Exploiting the properties of $W$ for large sizes along with the outcome of Lemma~\ref{lemb4} provides additional information on the fragmentation term.

\begin{corollary}\label{corb5}
\refstepcounter{NumConst}\label{cst2}
Let $T>0$ and $m\in (0,1)$. There is $C_{\ref{cst2}}(m,T)>0$ depending on $m$ and $T$ such that
\begin{equation*}
\int_0^T P_{m,j}(s) \mathrm{d}s \le C_{\ref{cst2}}(m,T)\ , \qquad P_{m,j}(s) := \int_1^\infty x^m a(x) f_j(s,x)\ \mathrm{d}x\ , \qquad s\in [0,T]\ .
\end{equation*}
\end{corollary}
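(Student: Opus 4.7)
The plan is to split the integral defining $P_{m,j}(s)$ at a threshold $R_m > 1$ chosen to exploit the asymptotic behaviour of $W$ recorded in Lemma~\ref{lemb3} and the local bound \eqref{a8} on the fragmentation rate. Since Lemma~\ref{lemb3} asserts that $(xW'(x)-W(x))/x^m \to \infty$ as $x\to\infty$, I would first pick $R_m \ge 1$ such that
\begin{equation*}
x^m \le xW'(x) - W(x)\ , \qquad x\ge R_m\ .
\end{equation*}
Note also that $xW'(x) - W(x)\ge 0$ on $[0,\infty)$, since $W\in\mathcal{C}_{VP,\infty}$ and Lemma~\ref{lemb2}(b) applies.

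Having fixed $R_m$, I would write
\begin{equation*}
P_{m,j}(s) = \int_1^{R_m} x^m a(x) f_j(s,x)\,\mathrm{d}x + \int_{R_m}^\infty x^m a(x) f_j(s,x)\,\mathrm{d}x\ .
\end{equation*}
For the large-size piece, the inequality above together with the non-negativity of $a$ and $f_j$ gives
\begin{equation*}
\int_{R_m}^\infty x^m a(x) f_j(s,x)\,\mathrm{d}x \le \int_0^\infty a(x)\bigl[xW'(x)-W(x)\bigr] f_j(s,x)\,\mathrm{d}x\ ,
\end{equation*}
whose integral over $[0,T]$ is controlled by $C_{\ref{cst1}}(T)$ thanks to Lemma~\ref{lemb4}. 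For the intermediate piece, assumption \eqref{a8} bounds $a(x) \le A_{R_m} R_m^{m_0+\nu+1}$ on $[1,R_m]$ (recall that $m_0+\nu+1 > 0$), and since $m < 1$ one has $x^m \le x$ for $x\ge 1$. Combining these observations with the mass bound \eqref{b7} yields
\begin{equation*}
\int_1^{R_m} x^m a(x) f_j(s,x)\,\mathrm{d}x \le A_{R_m} R_m^{m_0+\nu+1}\, M_1(f_j(s)) \le A_{R_m} R_m^{m_0+\nu+1}\, \varrho\ ,
\end{equation*}
uniformly in $s\in[0,T]$ and in $j$.

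Adding the two contributions and integrating over $[0,T]$ then produces the required bound with $C_{\ref{cst2}}(m,T) := T A_{R_m} R_m^{m_0+\nu+1}\varrho + C_{\ref{cst1}}(T)$, the dependence on $m$ entering only through the cutoff $R_m$. I do not anticipate any substantial obstacle: the real work has already been done in Lemma~\ref{lemb4}, and the present corollary is essentially a bookkeeping step extracting an estimate on $a f_j$ at large sizes from the coercivity of $xW'(x)-W(x)$ versus $x^m$ provided by Lemma~\ref{lemb3}.
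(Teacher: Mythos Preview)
Your argument is correct and mirrors the paper's proof almost verbatim: the paper likewise chooses a threshold $x_m>1$ from Lemma~\ref{lemb3}, bounds the tail $[x_m,\infty)$ by the $a(x)[xW'(x)-W(x)]$ integral controlled in Lemma~\ref{lemb4} (using Lemma~\ref{lemb2}~(b) for non-negativity), and handles $[1,x_m]$ via \eqref{a8}, $x^m\le x$, and the mass bound \eqref{b7}. The resulting constant is the same as yours.
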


\begin{proof}
Owing to Lemma~\ref{lemb3}, there is $x_m>1$ such that $xW'(x)-W(x)\ge x^m$ for $x>x_m$. Therefore, by \eqref{a8}, \eqref{b7}, Lemma~\ref{lemb2}~(b), and Lemma~\ref{lemb4},
\begin{align*}
\int_0^T \int_1^\infty x^m a(x) f_j(s,x)\ \mathrm{d}x\mathrm{d}s & \le \int_0^T \int_1^{x_m} x^m a(x) f_j(s,x)\ \mathrm{d}x\mathrm{d}s \\
& \qquad + \int_0^T \int_{x_m}^\infty x^m a(x) f_j(s,x)\ \mathrm{d}x\mathrm{d}s \\
& \le A_{x_m} x_m^{m_0+\nu+1} \int_0^T \int_1^{x_m} x^m f_j(s,x)\ \mathrm{d}x\mathrm{d}s \\
& \qquad + \int_0^T \int_{x_m}^\infty [x W'(x)-W(x)] a(x) f_j(s,x)\ \mathrm{d}x\mathrm{d}s \\
& \le A_{x_m} x_m^{m_0+\nu+1} \varrho T + C_{\ref{cst1}}(T)\ ,
\end{align*}
and the proof is complete.
\end{proof}

We now study the behaviour for small sizes.

\begin{lemma}\label{lemb6}
\refstepcounter{NumConst}\label{cst3}
Let $T>0$. There is $C_{\ref{cst3}}(T)>0$ depending on $T$ such that
\begin{equation*}
M_{m_0}(f_j(t)) \le C_{\ref{cst3}}(T)\ , \qquad t\in [0,T]\ .
\end{equation*}
\end{lemma}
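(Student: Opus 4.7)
The plan is to test the weak formulation \eqref{b6} with the admissible weight $\vartheta(x) = x^{m_0}$, which belongs to $\Theta_{m_0,j}$ because $m_0 \in (0,1)$ (recall $m_0 > -1-\nu \ge 0$). I would first observe that subadditivity of $r \mapsto r^{m_0}$ on $[0,\infty)$, which holds for any exponent in $(0,1)$, gives $\chi_\vartheta(x,y) = (x+y)^{m_0} - x^{m_0} - y^{m_0} \le 0$, so the coagulation contribution is non-positive and can be dropped.

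Next, I would compute the fragmentation coefficient explicitly using \eqref{a6}, namely
\begin{equation*}
N_\vartheta(y) = y^{m_0} - (\nu+2) y^{m_0} \int_0^1 z^{m_0+\nu}\,\mathrm{d}z = \frac{m_0 - 1}{m_0 + \nu + 1}\, y^{m_0},
\end{equation*}
which is strictly negative since $m_0 < 1$ and $m_0 + \nu + 1 > 0$. Consequently, the fragmentation term produces a positive contribution proportional to $\int_0^j x^{m_0} a(x) f_j(t,x)\,\mathrm{d}x$, and I would split this integral at $x=1$. For $x \in (0,1)$, assumption \eqref{a8} with $R=1$ yields $x^{m_0} a(x) \le A_1 x^{2m_0+\nu+1} \le A_1 x^{m_0}$ (using $m_0+\nu+1 > 0$ and $x \le 1$), so the small-size contribution is bounded by $A_1 M_{m_0}(f_j(t))$. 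For $x \in (1,j)$, the contribution equals $P_{m_0,j}(t)$, which is integrable on $[0,T]$ by Corollary~\ref{corb5} applied with $m = m_0 \in (0,1)$.

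Combining these bounds gives a differential inequality of the form
\begin{equation*}
\frac{\mathrm{d}}{\mathrm{d}t} M_{m_0}(f_j(t)) \le \frac{(1-m_0) A_1}{m_0+\nu+1}\, M_{m_0}(f_j(t)) + \frac{1-m_0}{m_0+\nu+1}\, P_{m_0,j}(t).
\end{equation*}
Integrating, using $M_{m_0}(f_j^{in}) \le C_0$ from \eqref{b9} and $\int_0^T P_{m_0,j}(s)\,\mathrm{d}s \le C_{\ref{cst2}}(m_0,T)$ from Corollary~\ref{corb5}, and then invoking Gronwall's lemma produces the uniform bound $C_{\ref{cst3}}(T)$.

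The main obstacle is controlling the fragmentation gain term for \emph{large} sizes: since no growth bound is imposed on $a$ at infinity, one cannot dominate $\int_1^j x^{m_0} a(x) f_j(t,x)\,\mathrm{d}x$ pointwise in time by a moment of $f_j$. This is precisely where Corollary~\ref{corb5}, itself a consequence of Lemma~\ref{lemb4} on the preservation of the refined weight $W$, provides the only available integrated-in-time control and makes the Gronwall argument close.
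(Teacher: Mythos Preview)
Your proof is correct and follows essentially the same route as the paper: drop the coagulation term by subadditivity, split the fragmentation term at $y=1$ using \eqref{a8} for small sizes and Corollary~\ref{corb5} for large sizes, and close with Gronwall. The only difference is cosmetic: the paper tests with $\vartheta_0(x)=\min\{x,x^{m_0}\}$ whereas you test directly with $x^{m_0}$, which is admissible in $\Theta_{m_0,j}$ and yields $M_{m_0}(f_j)$ on the left-hand side without further manipulation.
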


\begin{proof}
Setting $\vartheta_0(x) := \min\{x,x^{m_0}\}$ for $x>0$, we observe that
\begin{equation*}
\chi_{\vartheta_0}(x,y) \le 0\ , \qquad (x,y)\in (0,\infty)^2\ ,
\end{equation*}
and
\begin{align*}
N_{\vartheta_0}(y) & =  - \frac{1-{m_0}}{\nu+{m_0}+1} y^{m_0} \ , & y\in (0,1)\ , \\
N_{\vartheta_0}(y) & = - \frac{1-{m_0}}{\nu+{m_0}+1} \frac{1}{y^{\nu+1}} \ge - \frac{1-{m_0}}{\nu+{m_0}+1} y^{m_0}\ , & y\in (1,\infty)\ .
\end{align*}
We then infer from \eqref{a8} with $R=1$ and \eqref{b6} with $\vartheta=\vartheta_0$ that
\begin{align*}
\frac{\mathrm{d}}{\mathrm{d}t} M_{m_0}(f_j(t)) & \le \frac{1-{m_0}}{\nu+{m_0}+1} \int_0^\infty y^{m_0} a(y) f_j(t,y)\ \mathrm{d}y \\ 
& \le \frac{A_1}{\nu+{m_0}+1} \int_0^1 y^{m_0} f_j(t,y)\ \mathrm{d}y + \frac{P_{m_0,j}(t)}{\nu+{m_0}+1} \\
& \le \frac{A_1}{\nu+{m_0}+1} M_{m_0}(f_j(t)) + \frac{P_{m_0,j}(t)}{\nu+{m_0}+1}\ .
\end{align*}
We integrate the previous differential inequality and complete the proof with the help of Corollary~\ref{corb5}.
\end{proof}

\subsection{Uniform Integrability}\label{s2.4}

The previously established estimates guarantee that there is no unlimited growth for small sizes nor escape towards large sizes during the evolution of $(f_j)_{j\ge 2}$. To achieve weak compactness in $X_{m_0}$, it remains to prevent concentration near a finite size. For that purpose, we recall that, since $f^{in}\in X_{m_0}$, a refined version of the de la Vall\'ee Poussin theorem ensures that there is $\Phi\in \mathcal{C}_{VP,\infty}$ depending only on $f^{in}$ such that 
\begin{equation}
\mathcal{I} := \int_0^\infty x^{m_0} \Phi(f^{in}(x))\ \mathrm{d}x < \infty\ , \label{b12}
\end{equation}
see \cite{BLLxx, Laur15, Le77, dlVP15}.

\begin{lemma}\label{lemb7}
\refstepcounter{NumConst}\label{cst4}
Let $T>0$ and $R>1$. There is $C_{\ref{cst4}}(T,R)>0$ depending on $T$ and $R$ such that
\begin{equation*}
\int_0^R x^{m_0} \Phi(f_j(t,x))\ \mathrm{d}x \le C_{\ref{cst4}}(T,R)\ , \qquad t\in [0,T]\ ,
\end{equation*} 
the function $\Phi$ being defined in \eqref{b12}.
\end{lemma}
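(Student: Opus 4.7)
The plan is to derive a Gronwall-type bound for the functional $H_j(t) := \int_0^R x^{m_0} \Phi(f_j(t,x))\,\mathrm{d}x$. The initial value satisfies $H_j(0) \le \mathcal{I}$ by \eqref{b12} and the monotonicity of $\Phi$ (which follows from $\Phi(0)=\Phi'(0)=0$ and convexity). Differentiating along \eqref{b5a}, I discard the coagulation loss (manifestly non-positive) and the fragmentation loss, which contributes at most $-\int_0^R x^{m_0} a(x) \Phi(f_j(t,x))\,\mathrm{d}x \le 0$ after applying $\Phi(r) \le r\Phi'(r)$ from Lemma~\ref{lemb2}(b).

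For the coagulation gain, I would symmetrise and change variables $(y,x-y)\mapsto(y,z)$, then exploit the pointwise estimate $(y+z)^{m_0}\min(y,z)^{m_0} \le 2^{m_0}(yz)^{m_0}$ valid for $m_0\in(0,1)$, together with $K(y,z)\le L_R \min(y,z)^{m_0}$ from \eqref{a7b} (applicable since $y+z \le R$ implies $y,z\in(0,R)$) and Young's inequality $f_j(y)\Phi'(f_j(y+z)) \le \Phi(f_j(y+z)) + \Phi(f_j(y))$ from Lemma~\ref{lemb2}(c). Fubini then factorises each resulting term into a moment $M_{m_0}(f_j(t))$ (bounded by Lemma~\ref{lemb6}) multiplied by an integral dominated by $H_j(t)$, giving the coagulation gain $\le C_1(T,R) H_j(t)$.

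The fragmentation gain $\int_0^\infty a(y) f_j(y) \int_0^{\min(y,R)} x^{m_0} b_\nu(x,y) \Phi'(f_j(x))\,\mathrm{d}x\,\mathrm{d}y$ is split at $y=R$. For $y\le R$, the bound $a(y)\le A_R y^{m_0+\nu+1}$ from \eqref{a8} together with the explicit form $b_\nu(x,y) = (\nu+2) x^\nu/y^{\nu+1}$ and Young's inequality reduces the contribution, after Fubini and Lemma~\ref{lemb6}, to $\le C_2(T,R)H_j(t)$. For $y > R$, the outer integration involves $\int_R^\infty a(y) f_j(y) y^{-\nu-1}\,\mathrm{d}y$, which is dominated by $P_{m,j}(t)$ for some $m\in(\max\{0,-\nu-1\},1)$ thanks to $y^{-\nu-1}\le y^m$ for $y\ge 1$ and Corollary~\ref{corb5}; a further application of Young's inequality on the inner $x$-integration, combined with the growth property $S_p(\Phi) < \infty$ for $p\in(1,2)$ chosen close to $1$ (so that $\Phi(x^{m_0+\nu})$ is integrable near $x=0$), gives a bound of the form $C_3(T,R) H_j(t) + C_4(T,R) P_{m,j}(t) + C_5(T,R)$.

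The main obstacle is precisely the last step: the factor $b_\nu(x,y)\propto x^\nu$ introduces the singular weight $x^{m_0+\nu}$ which is \emph{not} dominated by $x^{m_0}$ on $(0,1)$, and absorbing it into $H_j(t)$ requires careful interplay between Young's inequality, the polynomial growth control $S_p(\Phi)<\infty$, and the time-integrated moment provided by Corollary~\ref{corb5}. Once the differential inequality $\frac{\mathrm{d}H_j}{\mathrm{d}t} \le C(T,R)\bigl[H_j(t) + P_{m,j}(t) + 1\bigr]$ is established, Gronwall's lemma together with $H_j(0)\le\mathcal{I}$ and $\int_0^T P_{m,j}(s)\,\mathrm{d}s \le C_{\ref{cst2}}(m,T)$ from Corollary~\ref{corb5} produces the desired uniform bound $H_j(t) \le C_{\ref{cst4}}(T,R)$.
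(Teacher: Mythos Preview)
Your strategy is essentially the paper's: differentiate $H_j(t)=\int_0^R x^{m_0}\Phi(f_j(t,x))\,\mathrm{d}x$, discard the two loss terms, bound the coagulation gain by $C\,M_{m_0}(f_j)H_j$ via \eqref{a7b} and \eqref{x1}, handle the singular factor $x^\nu$ in the fragmentation gain by a second use of \eqref{x1} together with $S_p(\Phi)<\infty$, control the outer $y$-integral by Lemma~\ref{lemb6} and Corollary~\ref{corb5}, and close with Gronwall.

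One point needs care. In the fragmentation gain you must apply \eqref{x1} with $s=x^\nu$ (keeping the weight $x^{m_0}$ outside), i.e.
\[
x^{m_0+\nu}\Phi'(f_j(x)) = x^{m_0}\bigl[x^\nu\Phi'(f_j(x))\bigr] \le x^{m_0}\Phi(x^\nu) + x^{m_0}\Phi(f_j(x)),
\]
so that the second term feeds back into $H_j(t)$ and the first is integrable near $0$ once $p\in(1,(m_0+1)/(-\nu))$. Your phrasing ``$\Phi(x^{m_0+\nu})$ integrable near $0$'' suggests $s=x^{m_0+\nu}$ instead, which would leave you with $\int_0^R \Phi(f_j(x))\,\mathrm{d}x$ \emph{without} the weight $x^{m_0}$; on $(0,1)$ this is not dominated by $H_j(t)$ and you have no independent control on it, so the argument would not close. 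A second, minor correction: the resulting differential inequality is not of the form $H_j'\le C(T,R)[H_j+P_{m,j}+1]$ but rather
\[
H_j'(t)\le \bigl[C(T,R)+c\,P_{-\nu-1,j}(t)\bigr]H_j(t)+C(T,R)\bigl[1+P_{m',j}(t)\bigr],
\]
with a time-dependent coefficient in front of $H_j$; Gronwall still applies since $\int_0^T P_{-\nu-1,j}(s)\,\mathrm{d}s$ is bounded by Corollary~\ref{corb5}. This is exactly how the paper proceeds.
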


\begin{proof}
We combine arguments from \cite{LaMi02b} with the subadditivity of $x\mapsto x^{m_0}$. Let $T>0$, $R>1$, and $t\in [0,T]$. On the one hand, 
\begin{align*}
I_{1,j}(R,t) & := \int_0^R x^{m_0} \Phi'(f_j(t,x)) \mathcal{C}_j f_j(t,x)\ \mathrm{d}x \\
& \le \frac{1}{2} \int_0^R \int_0^x x^{m_0} K(x-y,y) \Phi'(f_j(t,x)) f_j(t,x-y) f_j(t,y) \mathrm{d}y\mathrm{d}x\ ,
\end{align*}
and, by Fubini's theorem,
\begin{equation*}
I_{1,j}(R,t) \le \frac{1}{2} \int_0^R \int_0^{R-y} (x+y)^{m_0} K(x,y) \Phi'(f_j(t,x+y)) f_j(t,x) f_j(t,y) \mathrm{d}x\mathrm{d}y\ .
\end{equation*}
Owing to the subadditivity of $x\mapsto x^{m_0}$ and the symmetry of $K$, we further obtain
\begin{align*}
I_{1,j}(R,t) & \le \frac{1}{2} \int_0^R \int_0^{R-y} x^{m_0} K(x,y) \Phi'(f_j(t,x+y)) f_j(t,x) f_j(t,y) \mathrm{d}x\mathrm{d}y \\
& \qquad + \frac{1}{2} \int_0^R \int_0^{R-y} y^{m_0} K(x,y) \Phi'(f_j(t,x+y)) f_j(t,x) f_j(t,y) \mathrm{d}x\mathrm{d}y \\
& \le \frac{1}{2} \int_0^R \int_0^{R-x} y^{m_0} K(x,y) \Phi'(f_j(t,x+y)) f_j(t,x) f_j(t,y) \mathrm{d}y\mathrm{d}x \\
& \qquad + \frac{1}{2} \int_0^R \int_0^{R-x} y^{m_0} K(x,y) \Phi'(f_j(t,x+y)) f_j(t,x) f_j(t,y) \mathrm{d}y\mathrm{d}x \\
& \le  \int_0^R \int_0^{R-x} y^{m_0} K(x,y) \Phi'(f_j(t,x+y)) f_j(t,x) f_j(t,y) \mathrm{d}y\mathrm{d}x\ .
\end{align*}
Since $\Phi\in \mathcal{C}_{VP,\infty}$, we infer from \eqref{x1} with $r=f_j(t,x+y)$ and $s=f_j(t,y)$ that
\begin{align*}
I_{1,j}(R,t) & \le \int_0^R \int_0^{R-x} y^{m_0} K(x,y) \left[ \Phi(f_j(t,x+y)) + \Phi(f_j(t,y)) \right] f_j(t,x) \mathrm{d}y\mathrm{d}x \\
& \le \int_0^R \int_x^R (y-x)^{m_0} K(x,y-x) \Phi(f_j(t,y)) f_j(t,x) \mathrm{d}y\mathrm{d}x \\
& \qquad + \int_0^R \int_0^{R-x} y^{m_0} K(x,y) \Phi(f_j(t,y)) f_j(t,x) \mathrm{d}y\mathrm{d}x\ .
\end{align*}
We finally use \eqref{a7b} to conclude that
\begin{equation}
I_{1,j}(R,t) \le 2 L_R M_{m_0}(f_j(t)) \int_0^R y^{m_0} \Phi(f_j(t,y))\ \mathrm{d}y\ . \label{b13}
\end{equation}

On the other hand, we fix $p_0>1$ satisfying
\begin{equation*}
1 < p_0 < \frac{m_0+1}{-\nu} \le 1 + \frac{m_0}{-\nu} < 2\ ,
\end{equation*}
which is possible as $m_0+1>-\nu$. Using again Fubini's theorem,
\begin{align*}
I_{2,j}(R,t) & := \int_0^R x^{m_0} \Phi'(f_j(t,x)) \mathcal{F}_j f_j(t,x)\ \mathrm{d}x \\
& \le \int_0^R \int_x^\infty x^{m_0} a(y) \Phi'(f_j(t,x)) b_\nu(x,y) f_j(t,y) \mathrm{d}y\mathrm{d}x \\
& \le \int_0^\infty a(y) f_j(t,y) \int_0^{\min\{y,R\}} x^{m_0} b_\nu(x,y) \Phi'(f_j(t,x))\ \mathrm{d}x \mathrm{d}y \ .
\end{align*}
Since $\Phi\in \mathcal{C}_{VP,\infty}$, it follows from \eqref{a6}, Definition~\ref{defb1}~(c) with $p=p_0$, and \eqref{x1} with $r=f_j(t,x)$ and $s=x^\nu$ that
\begin{align*}
I_{2,j}(R,t) & \le (\nu+2) \int_0^\infty a(y) y^{-\nu-1} f_j(t,y) \int_0^{\min\{y,R\}} x^{m_0} \left[ \Phi(x^\nu) + \Phi(f_j(t,x)) \right]\ \mathrm{d}x \mathrm{d}y \\
& \le (\nu+2) S_{p_0}(\Phi) \int_0^\infty a(y) y^{-\nu-1} f_j(t,y) \int_0^y x^{m_0+\nu p_0}\ \mathrm{d}x \mathrm{d}y \\
& \qquad + (\nu+2) \int_0^\infty a(y) y^{-\nu-1} f_j(t,y) \int_0^R x^{m_0} \Phi(f_j(t,x))\ \mathrm{d}x \mathrm{d}y \\
& \le \frac{(\nu+2) S_{p_0}(\Phi)}{m_0+\nu p_0+1}  \int_0^\infty a(y) y^{m_0+\nu(p_0-1)} f_j(t,y)\ \mathrm{d}y \\
& \qquad + (\nu+2) \left( \int_0^\infty a(y) y^{-\nu-1} f_j(t,y)\ \mathrm{d}y \right) \int_0^R x^{m_0} \Phi(f_j(t,x))\ \mathrm{d}x\ .
\end{align*}
Since $m_0+\nu(p_0-1)$ and $-\nu-1$ both belong to $[0,1)$ and $m_0+\nu p_0+1>0$, we deduce from \eqref{a8} and Lemma~\ref{lemb6}  that
\begin{align*}
I_{2,j}(R,t) & \le C \left[ A_1 \int_0^1 y^{2m_0+1 + \nu p_0} f_j(t,y)\ \mathrm{d}y + P_{m_0+\nu(p_0-1)}(t) \right] \\
& \qquad + \left( A_1 \int_0^1 y^{m_0} f_j(t,y)\ \mathrm{d}y + P_{-\nu-1,j}(t) \right) \int_0^R x^{m_0} \Phi(f_j(t,x))\ \mathrm{d}x \\
& \le C \left[ A_1 C_{\ref{cst3}}(T) + P_{m_0+\nu(p_0-1)}(t) \right] \\
& \qquad + \left[ A_1 C_{\ref{cst3}}(T) + P_{-\nu-1,j}(t) \right] \int_0^R x^{m_0} \Phi(f_j(t,x))\ \mathrm{d}x \ .
\end{align*}
Combining the previous inequality with \eqref{b5}, \eqref{b13}, and Lemma~\ref{lemb6} gives
\begin{align*}
\frac{\mathrm{d}}{\mathrm{d}t} \int_0^R x^{m_0} \Phi(f_j(t,x))\ \mathrm{d}x & = I_{1,j}(R,t) + I_{2,j}(R,t) \\
& \hspace{-2cm} \le \left[ (2L_R+A_1) C_{\ref{cst3}}(T) + (\nu+2) P_{-\nu-1,j}(t) \right] \int_0^R x^{m_0} \Phi(f_j(t,x))\ \mathrm{d}x \\
& \hspace{-2cm} \qquad + C \left[ A_1 C_{\ref{cst3}}(T) + P_{m_0+\nu(p_0-1)}(t) \right]\ .
\end{align*}
Owing to Corollary~\ref{corb5}, we are in a position to apply Gronwall's lemma to complete the proof.
\end{proof}

Owing to \eqref{b7}, Lemma~\ref{lemb7}, and the superlinearity of $\Phi$ at infinity, see Definition~\ref{defb1}~(b) , we deduce from Dunford-Pettis' theorem that, for all $T>0$, there is a weakly compact subset $\mathcal{K}(T)$ of $X_{m_0}$ depending on $T$ such that
\begin{equation}
f_j(t) \in \mathcal{K}(T)\ , \qquad t\in [0,T]\ . \label{b14}
\end{equation}

\subsection{Time Equicontinuity}\label{s2.5}

Having established the (weak) compactness of the sequence $(f_j)_{j\ge 2}$ with respect to the size variable $x$, we now focus on the time variable. Our aim being to apply a variant of Arzel\`a-Ascoli's theorem, we study time equicontinuity properties of $(f_j)_{j\ge 2}$ in the next result.

\begin{lemma}\label{lemb8}
\refstepcounter{NumConst}\label{cst5} \refstepcounter{NumConst}\label{cst6}
Let $T>0$ and $R>1$. There are $C_{\ref{cst5}}(T,R)>0$ depending on $T$ and $R$ and $C_{\ref{cst6}}(T)>0$ depending on $T$ such that, for $t_1\in [0,T)$ and $t_2\in (t_1,T]$,
\begin{equation*}
\int_0^R x^{m_0} |f_j(t_2,x) - f_j(t_1,x)|\ \mathrm{d}x \le C_{\ref{cst5}}(R,T) (t_2-t_1) + C_{\ref{cst6}}(T) R^{(m_0-1)/2}\ .
\end{equation*}
\end{lemma}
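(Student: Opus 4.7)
The plan is to integrate the identity $f_j(t_2,x)-f_j(t_1,x)=\int_{t_1}^{t_2}\partial_s f_j(s,x)\,\mathrm{d}s$ against $x^{m_0}\mathbf{1}_{(0,R)}(x)$, which, combined with \eqref{b5a}, gives
\begin{equation*}
\int_0^R x^{m_0}|f_j(t_2,x)-f_j(t_1,x)|\,\mathrm{d}x \le \int_{t_1}^{t_2}\int_0^R x^{m_0}\bigl(|\mathcal{C}_j f_j|+|\mathcal{F}_j f_j|\bigr)(s,x)\,\mathrm{d}x\,\mathrm{d}s.
\end{equation*}
I would then majorize $|\mathcal{C}_j f_j|$ and $|\mathcal{F}_j f_j|$ by the sum of their four nonnegative gain/loss parts and estimate each separately. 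Three of them will be bounded pointwise in $s$ by a constant depending on $R$ and $T$, producing the $C_{\ref{cst5}}(R,T)(t_2-t_1)$ term; only the fragmentation gain coming from sizes $y>R$ resists a pointwise bound and yields the residual correction $C_{\ref{cst6}}(T)R^{(m_0-1)/2}$.

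For the coagulation gain, Fubini and the substitution $u=x-y$ give $\frac{1}{2}\iint_{u+y\le R}(u+y)^{m_0}K(u,y)f_j(u)f_j(y)\,\mathrm{d}u\,\mathrm{d}y$; since $m_0\in (0,1)$, subadditivity $(u+y)^{m_0}\le u^{m_0}+y^{m_0}$ together with \eqref{a7b} and the two inequalities $\min(u,y)^{m_0}\le u^{m_0}$ and $\min(u,y)^{m_0}\le y^{m_0}$ produce an upper bound $L_R M_{m_0}(f_j)^2\le C(R,T)$ via Lemma~\ref{lemb6}. For the coagulation loss I would split the inner integration at $y=R$: over $(0,R)$ the same type of estimate applies, while over $(R,\infty)$ the bound \eqref{a7a} gives $K(x,y)\le K_0(3+R)y$ for $y\ge R>1$ and $x\le R$, and the factor $y$ is absorbed by $M_1(f_j)\le\varrho$. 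The fragmentation loss is majorized using \eqref{a8} by $A_R\int_0^R x^{2m_0+\nu+1}f_j(s,x)\,\mathrm{d}x$; splitting at $x=1$ and using $m_0+\nu+1>0$ converts the surplus power into a multiplicative constant $1+R^{m_0+\nu+1}$ in front of $M_{m_0}(f_j)$.

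The crux is the fragmentation gain. By Fubini and \eqref{a6} it reads
\begin{equation*}
\frac{\nu+2}{m_0+\nu+1}\int_0^\infty a(y)\,y^{-\nu-1}[\min(y,R)]^{m_0+\nu+1}f_j(s,y)\,\mathrm{d}y,
\end{equation*}
which I would split at $y=R$. On $(0,R)$ the factor $[\min(y,R)]^{m_0+\nu+1}$ equals $y^{m_0+\nu+1}$ and the integrand reduces to $a(y)y^{m_0}f_j(s,y)$, bounded pointwise by $C(R,T)$ via \eqref{a8}, the inequality $y^{2m_0+\nu+1}\le (1+R^{m_0+\nu+1})y^{m_0}$ on $(0,R)$, and Lemma~\ref{lemb6}. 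On the tail $(R,\infty)$ one is left with $\frac{\nu+2}{m_0+\nu+1}R^{m_0+\nu+1}\int_R^\infty a(y)y^{-\nu-1}f_j(s,y)\,\mathrm{d}y$, and the key observation is the elementary inequality $y^{-\nu-1}\le R^{-\nu-1-m}y^m$ for $y>R$, valid for any $m>-\nu-1$ by monotonicity of $y\mapsto y^{-\nu-1-m}$. Choosing $m=(m_0+1)/2$, which lies in $(-\nu-1,1)$ since $\nu>-2$ forces $m_0>-\nu-1>-2\nu-3$, turns the prefactor into $R^{m_0-m}=R^{(m_0-1)/2}$ and leaves behind $P_{m,j}(s)$ from Corollary~\ref{corb5}. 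Integrating in $s$ and applying Corollary~\ref{corb5} to $\int_0^T P_{m,j}(s)\,\mathrm{d}s$ produces the correction $C_{\ref{cst6}}(T)R^{(m_0-1)/2}$, which is independent of $t_2-t_1$. The main obstacle is precisely this tail: since \eqref{a8} imposes no growth bound on $a$ at infinity, $P_{m,j}(s)$ is only controlled in $L^1(0,T)$ rather than pointwise, which is what forces the estimate to split into two pieces of genuinely different nature — one proportional to $t_2-t_1$ and one merely small in $R$.
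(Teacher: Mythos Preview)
Your proof is correct and follows essentially the same approach as the paper's: bound $\int_0^R x^{m_0}|\partial_t f_j|\,\mathrm{d}x$ by splitting $|\mathcal{C}_j f_j|$ and $|\mathcal{F}_j f_j|$ into their gain and loss parts, control all pieces pointwise in time by constants depending on $R$ and $T$ via \eqref{a7}, \eqref{a8}, \eqref{b7}, and Lemma~\ref{lemb6}, except for the fragmentation gain from $y>R$, which is handled by writing $R^{m_0+\nu+1}y^{-\nu-1}\le R^{(m_0-1)/2}y^{(m_0+1)/2}$ and invoking Corollary~\ref{corb5} with $m=(m_0+1)/2$. The only cosmetic differences are that the paper uses $(x+y)^{m_0}\le 2\max\{x,y\}^{m_0}$ instead of subadditivity, and bounds the coagulation loss tail directly by $4K_0 M_{m_0}(f_j)M_1(f_j)$ rather than via $K(x,y)\le K_0(3+R)y$.
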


\begin{proof}
Let $t\in [0,T]$. First, by Fubini's theorem,
\begin{align*}
I_{3,j}(R,t) & := \int_0^R x^{m_0} |\mathcal{C}_j f_j(t,x)|\ \mathrm{d}x \\
& \le \frac{1}{2} \int_0^R \int_0^{R-x} (x+y)^{m_0} K(x,y) f_j(t,x) f_j(t,y)\ \mathrm{d}y \mathrm{d}x \\
& \qquad + \int_0^R \int_0^R x^{m_0} K(x,y) f_j(t,x) f_j(t,y)\ \mathrm{d}y \mathrm{d}x \\
& \qquad + \int_0^R \int_R^\infty x^{m_0} K(x,y) f_j(t,x) f_j(t,y)\ \mathrm{d}y \mathrm{d}x \ .
\end{align*}
Owing to \eqref{a7} and the elementary inequality $(x+y)^{m_0}\le 2 \max\{x,y\}^{m_0}$, $(x,y)\in (0,\infty)^2$, we further obtain
\begin{align*}
I_{3,j}(R,t) & \le 2 L_R \int_0^R \int_0^R x^{m_0} y^{m_0} f_j(t,x) f_j(t,y)\ \mathrm{d}y \mathrm{d}x \\
& \qquad + K_0 \int_0^R \int_R^\infty x^{m_0} (2+x+y) f_j(t,x) f_j(t,y)\ \mathrm{d}y \mathrm{d}x \\
& \le 2 L_R M_{m_0}(f_j(t))^2 + 4 K_0 M_{m_0}(f_j(t)) M_1(f_j(t))\ ,
\end{align*}
hence, thanks to \eqref{b7} and Lemma~\ref{lemb6}, 
\begin{equation}
I_{3,j}(R,t) \le 2 L_R C_{\ref{cst3}}(T)^2 + 4 \varrho K_0 C_{\ref{cst3}}(T)\ . \label{b15}
\end{equation}
Next, using once more Fubini's theorem,
\begin{align*}
I_{4,j}(R,t) & := \int_0^R x^{m_0} |\mathcal{F}_j f_j(t,x)|\ \mathrm{d}x \\
& \le \int_0^R x^{m_0} a(x) f_j(t,x)\ \mathrm{d}x + \int_0^R a(y) f_j(t,y) \int_0^y x^{m_0} b_\nu(x,y)\ \mathrm{d}x \mathrm{d}y \\
& \qquad + \int_R^\infty a(y) f_j(t,y) \int_0^R x^{m_0} b_\nu(x,y)\ \mathrm{d}x \mathrm{d}y \ .
\end{align*}
We now infer from \eqref{a6}, \eqref{a8}, and Lemma~\ref{lemb6} that
\begin{align*}
I_{4,j}(R,t) & \le A_R R^{m_0+\nu+1} M_{m_0}(f_j(t)) + \frac{\nu+2}{m_0+\nu+1} \int_0^R a(y) y^{m_0} f_j(t,y)\ \mathrm{d}y \\
& \qquad + \frac{\nu+2}{m_0+\nu+1} R^{m_0+\nu+1} \int_R^\infty a(y) y^{-\nu-1} f_j(t,y)\ \mathrm{d}y \\
& \le C A_R R^{m_0+\nu+1} C_{\ref{cst3}}(T) + C R^{(m_0-1)/2} \int_R^\infty a(y) y^{(m_0+1)/2} f_j(t,y)\ \mathrm{d}y \ ,
\end{align*}
hence
\begin{equation}
I_{4,j}(R,t) \le C A_R R^{m_0+\nu+1} C_{\ref{cst3}}(T) + C R^{(m_0-1)/2} P_{(m_0+1)/2,j}(t)\ . \label{b16}
\end{equation}
It then follows from \eqref{b5}, \eqref{b15}, and \eqref{b16}  that
\begin{align*}
\int_0^R x^{m_0} |\partial_t f_j(t,x)|\ \mathrm{d}x & \le I_{3,j}(R,t)  + I_{4,j}(R,t) \\
& \le C_{\ref{cst5}}(R,T) + C R^{(m_0-1)/2} P_{(m_0+1)/2,j}(t)\ .
\end{align*} 
Consequently, since $(m_0+1)/2\in (0,1)$, Corollary~\ref{corb5} entails 
\begin{align*}
\int_0^R x^{m_0} |f_j(t_2,x) - f_j(t_1,x)|\ \mathrm{d}x & \le \int_{t_1}^{t_2} \int_0^R x^{m_0} |\partial_t f_j(t,x)|\ \mathrm{d}x\mathrm{d}t \\ 
& \le C_{\ref{cst5}}(R,T) (t_2-t_1) + C_{\ref{cst6}}(T) R^{(m_0-1)/2}\ ,
\end{align*}
and the proof is complete.
\end{proof}

Combining \eqref{b7} and Lemma~\ref{lemb8} allows us to improve the equicontinuity with respect to time of the sequence $(f_j)_{j\ge 2}$.

\begin{corollary}\label{corb9}
Let $T>0$. There is a function $\omega(T,\cdot): [0,\infty)\to [0,\infty)$ depending on $T$ such that
\begin{equation}
\lim_{s\to 0} \omega(T,s)=0\ , \label{b17}
\end{equation}
and, for $t_1\in [0,T)$ and $t_2\in (t_1,T]$,
\begin{equation*}
\int_0^\infty x^{m_0} |f_j(t_2,x) - f_j(t_1,x)|\ \mathrm{d}x \le \omega(T,t_2-t_1)\ .
\end{equation*}
\end{corollary}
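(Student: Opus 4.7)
The plan is to split the integral on $(0,\infty)$ at some threshold $R>1$, control the part on $(0,R)$ by Lemma~\ref{lemb8}, and control the tail on $(R,\infty)$ using the mass bound \eqref{b7}, then optimise over $R$ as a function of $s=t_2-t_1$.

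For the tail, since $m_0<1$ and $x>R\ge 1$, we have $x^{m_0} \le R^{m_0-1} x$, so by \eqref{b7},
\begin{equation*}
\int_R^\infty x^{m_0} |f_j(t_2,x)-f_j(t_1,x)|\ \mathrm{d}x \le R^{m_0-1} \bigl(M_1(f_j(t_2))+M_1(f_j(t_1))\bigr) \le 2\varrho\, R^{m_0-1}\ .
\end{equation*}
Combining this with Lemma~\ref{lemb8} yields, for all $R>1$ and $0\le t_1 < t_2 \le T$,
\begin{equation*}
\int_0^\infty x^{m_0} |f_j(t_2,x)-f_j(t_1,x)|\ \mathrm{d}x \le C_{\ref{cst5}}(R,T)(t_2-t_1) + C_{\ref{cst6}}(T) R^{(m_0-1)/2} + 2\varrho\, R^{m_0-1}\ ,
\end{equation*}
the right-hand side being independent of $j$.

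I would then define the candidate modulus by
\begin{equation*}
\omega(T,s) := \inf_{R>1} \Bigl\{ C_{\ref{cst5}}(R,T)\, s + C_{\ref{cst6}}(T) R^{(m_0-1)/2} + 2\varrho\, R^{m_0-1} \Bigr\}\ , \qquad s\ge 0\ ,
\end{equation*}
so that the claimed bound holds by construction. To verify \eqref{b17}, fix $\varepsilon>0$: since $m_0<1$, there is $R_\varepsilon>1$ large enough that $C_{\ref{cst6}}(T) R_\varepsilon^{(m_0-1)/2} + 2\varrho R_\varepsilon^{m_0-1} < \varepsilon/2$, and then $\omega(T,s) \le \varepsilon$ for all $s \le \varepsilon/(2C_{\ref{cst5}}(R_\varepsilon,T))$. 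Hence $\omega(T,s)\to 0$ as $s\to 0$.

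There is no real obstacle here; the argument is a standard ``cut-and-optimise'' coupling of spatial decay at infinity with local time-Lipschitz control. The only subtle point is that the constant $C_{\ref{cst5}}(R,T)$ is allowed to blow up arbitrarily as $R\to\infty$ (through $L_R$ and $A_R$), but this does not matter because one first selects $R$ to handle the tail and only afterwards shrinks $s$ at fixed $R$.
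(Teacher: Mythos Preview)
Your proof is correct and follows essentially the same approach as the paper: split at $R>1$, bound the tail via $x^{m_0}\le R^{m_0-1}x$ and \eqref{b7}, bound the bulk via Lemma~\ref{lemb8}, and define $\omega(T,s)$ as the infimum over $R>1$ of the resulting expression. Your explicit verification of \eqref{b17} is slightly more detailed than the paper's, but the argument is the same.
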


\begin{proof}
Let $0\le t_1 < t_2\le T$ and $R>1$. By \eqref{b7} and Lemma~\ref{lemb8},
\begin{align*}
\int_0^\infty x^{m_0} |f_j(t_2,x) - f_j(t_1,x)|\ \mathrm{d}x & \le \int_0^R x^{m_0} |f_j(t_2,x) - f_j(t_1,x)|\ \mathrm{d}x \\
& \qquad + R^{m_0-1} \int_R^\infty x [f_j(t_1,x) + f_j(t_2,x)]\ \mathrm{d}x \\
& \le C_{\ref{cst5}}(R,T) (t_2-t_1) + C_{\ref{cst6}}(T) R^{(m_0-1)/2} + 2 \varrho R^{m_0-1}\ .
\end{align*}
Introducing 
\begin{equation*}
\omega(T,s) := \inf_{R>1}\left\{ C_{\ref{cst5}}(R,T) s + C_{\ref{cst6}}(T) R^{(m_0-1)/2} + 2 \varrho R^{m_0-1} \right\}\ , \qquad s\ge 0\ ,
\end{equation*}
we deduce from the previous inequality which is valid for all $R>1$ that
\begin{equation*}
\int_0^\infty x^{m_0} |f_j(t_2,x) - f_j(t_1,x)|\ \mathrm{d}x \le \omega(T,t_2-t_1)
\end{equation*}
and observe that $\omega(T,\cdot)$ enjoys the property \eqref{b17} as $m_0<1$.
\end{proof}

\subsection{Convergence}\label{s2.6}

According to a variant of Arzel\`a-Ascoli's theorem \cite[Theorem~A.3.1]{Vrab03}, it readily follows from \eqref{b14} and Corollary~\ref{corb9} that the sequence $(f_j)_{j\ge 2}$ is relatively compact in $C([0,T];X_{m_0,w})$ for all $T>0$. Using a diagonal process, we construct a subsequence of $(f_j)_{j\ge 2}$ (not relabeled) and $f\in C([0,\infty);X_{m_0,w})$ such that, for all $T>0$,
\begin{equation}
f_j \longrightarrow f \;\text{ in }\; C([0,T];X_{m_0,w})\ . \label{b18}
\end{equation}
A first consequence of \eqref{b18} is that $f(t)$ is a non-negative function for all $t\ge 0$ and that $f(0)=f^{in}$. It further follows from Lemma~\ref{lemb4},  the superlinearity of $W$ for large sizes, and \eqref{b18} that the latter can be improved to 
\begin{equation}
f_j \longrightarrow f \;\text{ in }\; C([0,T];X_{1,w})\ . \label{b19}
\end{equation}
A straightforward consequence of \eqref{b7} and \eqref{b19} is the mass conservation
\begin{equation*}
M_1(f(t)) = M_1(f^{in})\ , \qquad t\ge 0\ . 
\end{equation*}
Owing to \eqref{a7}, \eqref{a8}, Corollary~\ref{corb5}, \eqref{b18}, and \eqref{b19}, it is by now a standard argument to pass to the limit in \eqref{b6} and deduce that $f$ is a weak solution on $[0,\infty)$ to \eqref{a1} in the sense of Definition~\ref{defb0}, see \cite{BLLxx, LaMi02b, Stew89} for instance. It is worth pointing out that the behaviour of the fragmentation term for large sizes is controlled by Corollary~\ref{corb5} which guarantees that, for all $T>0$, 
\begin{equation*}
\lim_{R\to\infty} \sup_{j\ge 2} \int_0^T \int_R^\infty a(x) f_j(s,x)\ \mathrm{d}x \mathrm{d}s =0\ ,
\end{equation*}
and thereby allows us to perform the limit $j\to\infty$ in the fragmentation term. We have thus completed the proof of Theorem~\ref{thmi1}, except for the propagation of moments of higher order which is proved in Proposition~\ref{propb10} in the next section.

\subsection{Higher Moments}\label{s2.7}

We supplement the above analysis with the study of the evolution of algebraic moments of any order. Let $f$ be the mass-conserving weak solution on $[0,\infty)$ to \eqref{a1} constructed in the previous section.

\begin{proposition}\label{propb10}
Consider $m>1$ and assume further that $f^{in}\in X_m$. Then $f\in L^\infty(0,T;X_m)$ for all $T>0$.
\end{proposition}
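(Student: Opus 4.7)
The plan is to derive, for the approximate solutions $f_j$ of Section~\ref{s2.2}, a uniform-in-$j$ bound on $M_m(f_j(t))$ on $[0,T]$ by a Gronwall argument, and then to transfer this bound to $f$ via the weak convergence \eqref{b18} and weak lower semicontinuity of $f\mapsto M_m(f)$. Since $f^{in}\in X_m$, the initial moment $M_m(f_j^{in})\le M_m(f^{in})$ is finite, so the scheme is well-initialized.

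First I would test the weak formulation \eqref{b6} against $\vartheta(x)=x^m$, which belongs to $\Theta_{m_0,j}$ since $m>1>m_0$ and $\vartheta(0)=0$. The fragmentation term turns out to be favorable: using \eqref{a6},
\begin{equation*}
N_{x^m}(y) = y^m - \int_0^y x^m b_\nu(x,y)\,\mathrm{d}x = \left(1-\frac{\nu+2}{m+\nu+1}\right)y^m = \frac{m-1}{m+\nu+1}\,y^m,
\end{equation*}
which is non-negative because $m-1>0$ and $m+\nu+1>0$ (the latter from $m>1>-1-\nu$ since $\nu>-2$). The corresponding term in $\frac{\mathrm{d}}{\mathrm{d}t}M_m(f_j(t))$ is therefore non-positive and can simply be discarded.

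For the coagulation term, I would rely on the elementary inequality
\begin{equation*}
(x+y)^m - x^m - y^m \le m\,\min\{x,y\}\,(x+y)^{m-1},\qquad (x,y)\in [0,\infty)^2,\ m\ge 1,
\end{equation*}
and split the double integral at the boundary of the unit cube $(0,1)^2$. On $(0,1)^2$, \eqref{a7b} gives $K(x,y)\le L_1\min\{x,y\}^{m_0}$ and $(x+y)^{m-1}\le 2^{m-1}$, so after symmetrization the integrand is dominated by a multiple of $\min\{x,y\}^{m_0+1}\le x^{m_0}y+xy^{m_0}$; the resulting contribution is controlled by $CM_{m_0}(f_j(t))M_1(f_j(t))$, hence uniformly bounded by Lemma~\ref{lemb6} and \eqref{b7}. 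On $(0,\infty)^2\setminus(0,1)^2$, when $x\le y$ one has $y\ge 1$, so $K(x,y)\le 4K_0 y$ by \eqref{a7a} and $(x+y)^{m-1}\le 2^{m-1}y^{m-1}$; the integrand is then bounded by $Cxy^m$, and after symmetrization the total contribution is dominated by $CM_1(f_j(t))M_m(f_j(t))\le C\varrho M_m(f_j(t))$. Combining these estimates produces the differential inequality
\begin{equation*}
\frac{\mathrm{d}}{\mathrm{d}t} M_m(f_j(t)) \le C(T,m) + C\varrho\,M_m(f_j(t)),
\end{equation*}
and Gronwall's lemma gives $M_m(f_j(t))\le C(T,m)$ uniformly in $j\ge 2$ and $t\in[0,T]$.

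Passing to the limit is then routine: for each $R>0$, $\int_0^R x^m f_j(t,x)\,\mathrm{d}x\to\int_0^R x^m f(t,x)\,\mathrm{d}x$ by \eqref{b18} since $x\mapsto x^{m-m_0}\mathbf{1}_{(0,R)}(x)$ is bounded, and monotone convergence as $R\to\infty$ yields $M_m(f(t))\le C(T,m)$ for $t\in[0,T]$. The one delicate point is the splitting of the coagulation integral: isolating the region $(0,1)^2$, where the vanishing assumption \eqref{a7b} on $K$ provides decisive smoothing, from its complement, where only the linear growth \eqref{a7a} is exploited, allows one to sidestep any need to bound the intermediate moment $M_{m-1}(f_j)$, which is not a priori controlled when $m-1<m_0$.
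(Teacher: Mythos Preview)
Your proof is correct and follows essentially the same strategy as the paper: test \eqref{b6} with $\vartheta(x)=x^m$, observe that the fragmentation contribution has the good sign, split the coagulation integral at $(0,1)^2$ using \eqref{a7b} inside and \eqref{a7a} outside, close a linear Gronwall inequality for $M_m(f_j(t))$, and pass to the limit. The only cosmetic differences are that the paper bounds $\chi_{\vartheta_m}$ on the complement of $(0,1)^2$ via the inequality $(x+y)\chi_{\vartheta_m}(x,y)\le C(m)(x^m y+xy^m)$ from \cite[Lemma~2.3~(ii)]{Carr92} rather than your mean-value estimate, obtains $M_{m_0}(f_j)^2$ instead of $M_{m_0}(f_j)M_1(f_j)$ on $(0,1)^2$, and passes to the limit with \eqref{b19} and Fatou's lemma rather than your truncation argument based on \eqref{b18}; none of these affect the substance.
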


\begin{proof}
Let $T>0$, $t\in (0,T)$, and $j\ge 2$. Setting $\vartheta_m(x):=x^m$ for $x\in (0,\infty)$, it readily follows from \eqref{a6} that
\begin{equation*}
N_{\vartheta_m}(y) = \frac{m-1}{m+\nu+1} y^m \ge 0\ , \qquad y>0\ ,
\end{equation*}
\refstepcounter{NumConst}\label{cst7}
while there is $C_{\ref{cst7}}(m)>0$ depending only on $m$ such that
\begin{equation*}
(x+y) \chi_{\vartheta_m}(x,y) \le C_{\ref{cst7}}(m) \left( x^m y + x y^m \right)\ , \qquad (x,y)\in (0,\infty)^2\ ,
\end{equation*} 
by \cite[Lemma~2.3~(ii)]{Carr92}. Consequently, by \eqref{a7},
\begin{equation*}
\chi_{\vartheta_m}(x,y) K(x,y) \le 2 L_1 \min\{x,y\}^{m_0} \max\{x,y\}^m \le 2 L_1 (xy)^{m_0}\ , \qquad (x,y)\in (0,1)^2\ ,
\end{equation*} 
and 
\begin{align*}
\chi_{\vartheta_m}(x,y) K(x,y) & \le K_0 (2+x+y) \chi_{\vartheta_m}(x,y) \le 3 K_0 (x+y) \chi_{\vartheta_m}(x,y) \\
& \le 3 K_0 C_{\ref{cst7}}(m) \left( x^m y + x y^m \right)\ , \qquad (x,y)\in (0,\infty)^2\setminus (0,1)^2\ .
\end{align*}
We then infer from \eqref{b6} with $\vartheta=\vartheta_m$ and the previous inequalities that
\begin{equation*}
\frac{\mathrm{d}}{\mathrm{d}t} M_m(f_j(t)) \le L_1 M_{m_0}(f_j(t))^2 + 3 K_0 C_{\ref{cst7}}(m) M_1(f_j(t)) M_m(f_j(t))\ .
\end{equation*}
Recalling \eqref{b7} and Lemma~\ref{lemb6}, we end up with
\begin{equation*}
\frac{\mathrm{d}}{\mathrm{d}t} M_m(f_j(t)) \le L_1 C_{\ref{cst3}}(T)^2 + 3 K_0 C_{\ref{cst7}}(m) \varrho M_m(f_j(t))\ ,
\end{equation*}
and integrate the previous differential inequality to deduce that
\begin{equation}
M_m(f_j(t)) \le e^{3 K_0 C_{\ref{cst7}}(m) \varrho t} \left[ M_m(f_j^{in}) + \frac{L_1 C_{\ref{cst3}}(T)^2}{3 K_0 C_{\ref{cst7}}(m) \varrho}\right]\ , \qquad t\in [0,T]\ . \label{b21}
\end{equation}
Since $M_m(f_j^{in})\le M_m(f^{in})<\infty$, Lemma~\ref{lemb8} readily follows from \eqref{b21} after letting $j\to\infty$ with the help of \eqref{b19} and Fatou's lemma.
\end{proof}

\section{Uniqueness}\label{s3}

\begin{proof}[Proof of Theorem~\ref{thmi2}]
Consider two weak solutions $f_1$ and $f_2$  on $[0,\infty)$ to \eqref{a1}-\eqref{a2} on $[0,\infty)$ enjoying the properties listed in Theorem~\ref{thmi2}. We set $F := f_1-f_2$, $\sigma := \mathrm{sign}(f_1-f_2)$, $\xi(x) := \max\{x^{m_0}, x^{1+\delta} \}$, and
\begin{equation*}
\Xi(t,x,y) := \xi(x+y) \sigma(t,x+y) - \xi(x) \sigma(t,x) - \xi(y) \sigma(t,y)
\end{equation*} 
for $(t,x,y)\in (0,\infty)^3$. Arguing as in the proof of \cite[Theorem~2.9]{EMRR05} and \cite{Stew90b}, we deduce from \eqref{a1} that
\begin{align*}
& \frac{\mathrm{d}}{\mathrm{d}t} \int_0^\infty \xi(x) |F(t,x)|\ \mathrm{d}x \\ 
& \qquad = \frac{1}{2} \int_0^\infty \int_0^\infty K(x,y) \Xi(t,x,y) (f_1+f_2)(t,y) F(t,x)\ \mathrm{d}y \mathrm{d}x \\
& \qquad \qquad + \int_0^\infty a(y) F(t,y) \left[ \int_0^y \xi(x) \sigma(t,x) b_\nu(x,y)\ \mathrm{d}x - \xi(y) \sigma(t,y) \right]\ \mathrm{d}y\ .
\end{align*}
Since
\begin{align*}
\Xi(t,x,y) F(t,x) & = \xi(x+y) \sigma(t,x+y) F(t,x) - \xi(x) |F(t,x)| - \xi(y) \sigma(t,y) F(t,x) \\
& \le \xi(x+y) |F(t,x)| - \xi(x) |F(t,x)| + \xi(y) |F(t,x)|
\end{align*}
and
\begin{align*}
& F(t,y) \left[ \int_0^y \xi(x) \sigma(t,x) b_\nu(x,y)\ \mathrm{d}x - \xi(y) \sigma(t,y) \right] \\
& \qquad \le \int_0^y \xi(x) b_\nu(x,y)\ \mathrm{d}x |F(t,y)| - \xi(y) |F(t,y)| \\
& \qquad \le - N_\xi(y) |F(t,y)|\ ,
\end{align*}
we obtain that
\begin{align}
\frac{\mathrm{d}}{\mathrm{d}t} \int_0^\infty \xi(x) |F(t,x)|\ \mathrm{d}x & \le \int_0^\infty \int_0^\infty K(x,y) \Xi_0(x,y) (f_1+f_2)(t,y) |F(t,x)|\ \mathrm{d}y \mathrm{d}x \nonumber \\
& \qquad - \int_0^\infty a(y) N_\xi(y) |F(t,y)|\ \mathrm{d}y\ , \label{b22}
\end{align}
with
\begin{equation*}
\Xi_0(x,y) := \xi(x+y) -\xi(x) + \xi(y)\ , \qquad (x,y)\in (0,\infty)^2\ .
\end{equation*}
On the one hand, we infer from \eqref{a7}, \eqref{u0}, and the subadditivity of $x\mapsto x^{m_0}$ and $x\mapsto x^\delta$ that:
\begin{itemize}
\item[(c1)] for $(x,y)\in (0,1)^2$,
\begin{align*}
K(x,y) \Xi_0(x,y) & \le L_1 \min\{x,y\}^{m_0} \left[ (x+y)^{m_0} - x^{m_0} + y^{m_0} \right] \\ 
& \le 2 L_1 \min\{x,y\}^{m_0} y^{m_0} \le 2 L_1 \xi(x) y^{m_0}\ ;
\end{align*}
\item[(c2)] for $(x,y)\in (0,1)\times (1,\infty)$, 
\begin{align*}
K(x,y) \Xi_0(x,y) & \le K_1 x^{m_0} y \left[ x^{1+\delta} + (1+\delta) y (x+y)^\delta - x^{m_0} + y^{1+\delta} \right] \\
& \le K_1 x^{m_0} y \left[ (1+\delta) 2^\delta y^{1+\delta} + y^{1+\delta} \right] \\
& \le K_1 \left[ 1 + (1+\delta)2^\delta \right] \xi(x) y^{2+\delta}\ ;
\end{align*}
\item[(c3)] for $(x,y)\in (1,\infty)\times (0,1)$,
\begin{align*}
K(x,y) \Xi_0(x,y) & \le K_0 (2+x+y) \left[ (1+\delta) y (x+y)^\delta + y^{m_0} \right] \\ 
& \le 4 K_0 x \left[ (1+\delta) 2^\delta x^\delta y + x^\delta y^{m_0} \right] \\
& \le 4 K_0 \left[ 1 + (1+\delta)2^\delta \right] \xi(x) y^{m_0}\ ;
\end{align*}
\item[(c4)] for $(x,y)\in (1,\infty)^2$, 
\begin{align*}
K(x,y) \Xi_0(x,y) & \le K_0 (2+x+y) \left[ (1+\delta) y (x+y)^\delta + y^{1+\delta} \right] \\ 
& \le 2 K_0 (x+y) \left[ (1+\delta) y x^\delta + (2+\delta) y^{1+\delta} \right] \\
& \le 2(2+\delta) K_0 \left( x^{1+\delta} y + x y^{1+\delta} + x^\delta y^2 + y^{2+\delta} \right)\\ 
& \le 8(2+\delta) K_0 \xi(x) y^{2+\delta}\ .
\end{align*}
\end{itemize} 
On the other hand, owing to \eqref{a6} and \eqref{a8},
\begin{itemize}
\item[(f1)] for $y\in (0,1)$, 
\begin{equation*}
- a(y) N_\xi(y) = \frac{1-m_0}{\nu+1+m_0}y^{m_0} a(y) \le \frac{A_1}{\nu+1+m_0} \xi(y) \ ;
\end{equation*}
\item[(f2)] for $y>1$, 
\begin{align*}
- N_\xi(y) & = \frac{\nu+2}{\nu+1+m_0} y^{-\nu-1} + \frac{\nu+2}{\nu+2+\delta} \left( y^{1+\delta} - y^{-\nu-1} \right) - y^{1+\delta} \\
& = \frac{(\nu+2)(1+\delta-m_0)}{(\nu+1+m_0)(\nu+2+\delta)} y^{-\nu-1} - \frac{\delta}{\nu+2+\delta} y^{1+\delta} \\
& = \frac{\delta}{\nu+2+\delta} y^{-\nu-1} \left( Y^{2+\delta+\nu} - y^{2+\delta+\nu} \right) \ ,
\end{align*}
with
\begin{equation*}
Y^{2+\delta+\nu} := \frac{(\nu+2)(1+\delta-m_0)}{\delta(\nu+1+m_0)}\ .
\end{equation*}
Either $y\ge \max\{1,Y\}$ and 
\begin{equation*}
- a(y) N_\xi(y) \le \frac{\delta}{\nu+2+\delta} a(y) y^{-\nu-1} \left( Y^{2+\delta+\nu} - y^{2+\delta+\nu} \right) \le 0\ ,
\end{equation*} 
or $y\in (1,\max\{1,Y\})$ and
\begin{align*}
- a(y) N_\xi(y) & \le \frac{\delta}{\nu+2+\delta} a(y) y^{-\nu-1} Y^{2+\delta+\nu} \\
& \le A_Y Y^{2+\delta+\nu} y^{m_0} \le A_Y Y^{2+\delta+\nu} \xi(y)\ .
\end{align*}
\end{itemize}

Collecting the estimates in (c1)--(c4) and (f1)-(f2), we infer from \eqref{b22} that there is a positive constant $\kappa>0$ depending only on $L_1$, $K_1$, $\delta$, $K_0$, $\nu$, $m_0$, and $a$ such that
\begin{align*}
& \frac{\mathrm{d}}{\mathrm{d}t} \int_0^\infty \xi(x) |F(t,x)|\ \mathrm{d}x \\ 
& \qquad \le \kappa \int_0^\infty \int_0^\infty \xi(x) \left( y^{m_0} + y^{2+\delta} \right) (f_1+f_2)(t,y) |F(t,x)|\ \mathrm{d}y\mathrm{d}x \\
& \qquad\qquad + \kappa \int_0^\infty \xi(y) |F(t,y)|\ \mathrm{d}y \\
& \qquad \le \kappa \left[ 1 + M_{m_0}((f_1+f_2)(t)) + M_{2+ \delta}((f_1+f_2)(t)) \right] \int_0^\infty \xi(y) |F(t,y)|\ \mathrm{d}y\ .
\end{align*}
Since $M_{m_0}(f_i)$ and $M_{2+\delta}(f_i)$ both belong to $L^\infty(0,T)$ for $i\in\{1,2\}$ and $f_1(0)=f_2(0)=f^{in}$, we use Gronwall's lemma to complete the proof.
\end{proof}


\bibliographystyle{siam}
\bibliography{NonIntegrableFragmentDistribution}

\end{document}